\numberwithin{equation}{section}
\numberwithin{figure}{section}
\theoremstyle{plain}
\newtheorem{thm}{\protect\theoremname}[section]
  \theoremstyle{plain}
  \newtheorem{prop}[thm]{\protect\propositionname}
  \theoremstyle{definition}
  \newtheorem{defn}[thm]{\protect\definitionname}
  \theoremstyle{plain}
  \newtheorem{lem}[thm]{\protect\lemmaname}
  \theoremstyle{remark}
  \newtheorem{claim}[thm]{\protect\claimname}
  \theoremstyle{plain}
  \newtheorem{cor}[thm]{\protect\corollaryname}
  \theoremstyle{definition}
  \newtheorem*{problem*}{\protect\problemname}
\newcommand{\xyC}[1]{%
\makeatletter
\xydef@\xymatrixcolsep@{#1}
\makeatother
} 
\newcounter{quotecount}
\newcommand{\MyQuote}[1]{\vspace{6pt}
\noindent($*$)\hfill
\parbox{0.93\textwidth}{#1}\\[6pt]}
  \providecommand{\claimname}{Claim}
  \providecommand{\corollaryname}{Corollary}
  \providecommand{\definitionname}{Definition}
  \providecommand{\lemmaname}{Lemma}
  \providecommand{\problemname}{Problem}
  \providecommand{\propositionname}{Proposition}
\providecommand{\theoremname}{Theorem}
\begin{document}
\title[Similarity and commutators of matrices over PIDs]{Similarity and commutators of matrices over principal ideal rings}

\author{Alexander Stasinski}
\begin{abstract}
We prove that if $R$ is a principal ideal ring and $A\in\M_{n}(R)$
is a matrix with trace zero, then $A$ is a commutator, that is, $A=XY-YX$
for some $X,Y\in\M_{n}(R)$. This generalises the corresponding result
over fields due to Albert and Muckenhoupt, as well as that over $\Z$
due to Laffey and Reams, and as a by-product we obtain new simplified
proofs of these results. We also establish a normal form for similarity
classes of matrices over PIDs, generalising a result of Laffey and
Reams. This normal form is a main ingredient in the proof of the result
on commutators.
\end{abstract}

\address{Department of Mathematical Sciences, Durham University, South Rd,
Durham, DH1 3LE, UK}

\email{alexander.stasinski@durham.ac.uk}

\maketitle

\section{Introduction}

Let $R$ denote an arbitrary ring. If a matrix $A\in\M_{n}(R)$ is
a commutator, that is, if $A=[X,Y]=XY-YX$ for some $X,Y\in\M_{n}(R)$,
then $A$ must have trace zero. The problem of when the converse holds
goes back at least to Shoda \cite{Shoda} who showed in 1937 that
if $K$ is a field of characteristic zero, then every $A\in\M_{n}(K)$
with trace zero is a commutator. Shoda's argument fails in positive
characteristic, but Albert and Muckenhoupt \cite{Albert-Muckenhoupt}
found another argument valid for all fields. The first result for
rings which are not fields was obtained by Lissner \cite{Lissner}
who proved that if $R$ is a principal ideal domain (PID) then every
$A\in\M_{2}(R)$ with trace zero is a commutator. A motivation for
Lissner's work was the relation with a special case of Serre's problem
on projective modules over polynomial rings, nowadays known as the
Quillen-Suslin theorem (see \cite[Sections~1-2]{Lissner}). Lissner's
result on commutators in $\M_{2}(R)$ for $R$ a PID was rediscovered
by Vaserstein \cite{Vaserstein/87} and Rosset and Rosset \cite{Rosset},
respectively. Vaserstein also formulated the problem of whether every
$A\in\M_{n}(\Z)$ with trace zero is a commutator for $n\geq3$ (see
\cite[Section~5]{Vaserstein/87}). A significant breakthrough was
made by Laffey and Reams \cite{Laffey-Reams} who settled Vaserstein's
problem in the affirmative. However, their proofs involve steps which
are special to the ring of integers $\Z$ and do not generalise to
other rings in any straightforward way. The most crucial step of this
kind is an appeal to Dirichlet's theorem on primes in arithmetic progressions.
The analogue of Dirichlet's theorem, although true in the ring $\F_{q}[x]$,
fails for other Euclidean domains such as $\C[x]$ or discrete valuation
rings. Nevertheless, in \cite{Laffey-notes} Laffey asked whether
any matrix with trace zero over a Euclidean domain is a commutator.
Until now this appears to have remained an open problem even for $n=3$,
except for the cases where $R$ is a field or $\Z$.

In the present paper we answer Laffey's question by proving that if
$R$ is any PID and $A\in\M_{n}(R)$ is a matrix with trace zero,
then $A$ is a commutator. This is achieved by extending the methods
of Laffey and Reams and in particular removing the need for Dirichlet's
theorem. Another of our main results is a certain (non-unique) normal
form for similarity classes of matrices over PIDs, itself a generalisation
of a result proved in \cite{Laffey-Reams} over $\Z$. The normal
form, while interesting in its own right and potentially for other
applications, is also a key ingredient in the proof of the main result
on commutators.

We now describe the contents of the paper in more detail. In Section~\ref{sec:Regular-elements}
we define regular elements in $\M_{n}(R)$ for an arbitrary ring $R$
and state some of their basic properties. Regular elements play a
central role in the problem of writing matrices as commutators because
of the criterion of Laffey and Reams, treated in Section~\ref{sec:LF-criterion}.
The criterion says that if $R$ is a PID and $A,X\in\M_{n}(R)$ with
$X$ regular mod every maximal ideal of $R$, then a necessary and
sufficient condition for $A$ to be a commutator is that $\Tr(X^{r}A)=0$
for $r=0,1,\dots,n-1$. This was proved in \cite{Laffey-Reams} for
$R=\Z$, but the proof goes through for any PID with only a minor
modification.

In Section~\ref{sec:Comm-fields} we apply the Laffey-Reams criterion
for fields to give a short proof of the theorem of Albert and Muckenhoupt
mentioned above. We actually prove a stronger and apparently new result,
namely that in the commutator one of the matrices may be taken to
be regular (see Proposition~\ref{sec:Comm-fields}).

Section~\ref{sec:Similarity} is concerned with similarity of matrices
over PIDs, that is, matrices up to conjugation by invertible elements.
Our first main result is Theorem~\ref{thm:LF-normalform} stating
that every non-scalar element in $\M_{n}(R)$ is similar to one in
a special form. This result was established by Laffey and Reams over
$\Z$. However, a crucial step in their proof uses the fact that $2$
is a prime element in $\Z$, and the analogue of this does not hold
in an arbitrary PID. To overcome this, our proof involves an argument
based on the surjectivity of the map $\SL_{n}(R)\rightarrow\SL_{n}(R/I)$
for an ideal $I$, which in a certain sense lets us avoid any finite
set of primes, in particular those of index $2$ in $R$ (see Lemma~\ref{lem:b12-avoidsprimes}).
This argument is evident especially in the proof of Proposition~\ref{prop:3x3-normalform}.
Apart from this, our proof uses the methods of \cite{Laffey-Reams},
although we give a different argument, avoiding case by case considerations,
and have made Lemma~\ref{lem:row-column} explicit.

Our second main result is Theorem~\ref{thm:Main} whose proof occupies
Section~\ref{sec:Proof-Main}, and follows the lines of \cite[Section~4]{Laffey-Reams}.
There are two new key ideas in our proof. First, there is again an
argument which at a certain step allows us to avoid finitely many
primes, including those of index $2$ in $R$. This step in the proof
is the choice of $q$ and uses a special case of Lemma~\ref{lem:GCD}\,\ref{enu:GCD-lemma abx}.
Secondly, we apply Lemma~\ref{lem:Centr-product} to obtain a set
of generators of the centraliser of a certain matrix modulo a product
of distinct primes; see (\ref{eq:Centr-span-severalprimes}). It is
this set of generators together with our choice of $q$ and an appropriate
choice of $t$ in (\ref{eq:at+y}) which allows us to avoid Dirichlet's
theorem. It is interesting to note that the proofs of our main results,
Theorems~\ref{thm:LF-normalform} and \ref{thm:Main}, despite being
rather different, both involve the technique of avoiding finitely
many primes, in particular those of index $2$ in $R$. Our proof
of Theorem~\ref{thm:Main} also simplifies parts of the proof of
Laffey and Reams over $\Z$ since we avoid some of the case by case
considerations present in the latter. By a theorem of Hungerford,
Theorem~\ref{thm:Main}, once established, easily extends to any
principal ideal ring (not necessarily an integral domain); see Corollary~\ref{cor:Coroll-Main}.

The final Section~\ref{sec:Further-directions} discusses the possibility
of generalising Theorem~\ref{thm:Main} to other classes of rings
such as Dedekind domains, and mentions some known counter-examples.

We end this introduction by mentioning some recent work on matrix
commutators. In \cite{Mesyan} Mesyan proves that if $R$ is a ring
(not necessarily commutative) and $A\in\M_{n}(R)$ has trace zero,
then $A$ is a sum of two commutators. This result was proved for
commutative rings in earlier unpublished work of Rosset. In \cite{Lam-Khurana-Gen-comm}
Khurana and Lam study {}``generalised commutators'', that is, elements
of the form $XYZ-ZYX$, where $X,Y,Z\in\M_{n}(R)$. They establish
in particular that if $R$ is a PID, then every element in $\M_{n}(R)$,
$n\geq2$, is a generalised commutator. Although these results may
seem closely related to the commutator problem studied in the present
paper, the proofs are in fact very different.

\subsection*{Notation and terminology}

We use $\N$ to denote the natural numbers $\{1,2,\dots\}$. Throughout
the paper a ring will always mean a commutative ring with identity.
In Sections~\ref{sec:LF-criterion}-\ref{sec:Proof-Main} $R$ will
be a PID, unless stated otherwise.

Let $R$ be a ring. We denote the set of maximal ideals of $R$ by
$\Specm R$ and the ring of $n\times n$ matrices over $R$ by $\M_{n}(R)$.
For $A,B\in\M_{n}(R)$ we call $[A,B]=AB-BA$ the \emph{commutator}
of $A$ and $B$. Let $A\in\M_{n}(R)$. A matrix $B\in\M_{n}(R)$
is said to be \emph{similar} to $A$ if there exists a $g\in\GL_{n}(R)$
such that $gAg^{-1}=B$. The transpose of $A$ is denoted by $A^{T}$
and the trace of $A$ by $\Tr(A)$. We write $C_{\M_{n}(R)}(A)$ for
the centraliser of $A$ in $\M_{n}(R)$, that is,
\[
C_{\M_{n}(R)}(A)=\{B\in\M_{n}(R)\mid[A,B]=0\}.
\]
Let $f(x)=a_{0}+a_{1}x+\dots+x^{n}\in R[x]$ be the characteristic
polynomial of $A$. We will refer to the \emph{companion matrix} associated
to $A$ (or to $f$) as the matrix $C\in\M_{n}(R)$ such that
\[
C=(c_{ij})=\begin{cases}
c_{i,i+1}=1 & \text{for }1\leq i\leq n-1,\\
c_{ni}=-a_{i-1} & \text{for }1\leq i\leq n,\\
c_{ij}=0 & \text{otherwise}.
\end{cases}
\]
The identity matrix in $\M_{n}(R)$ is denoted by $1$ or sometimes
$1_{n}$. For $u,v\in\N$ we write $E_{uv}$ for the matrix units,
that is, $E_{uv}=(e_{ij})$ with $e_{uv}=1$ and $e_{ij}=0$ otherwise.
The size of the matrices $E_{uv}$ is suppressed in the notation and
will be determined by the context.

\section{\label{sec:Regular-elements}Regular elements}

Let $\bfG$ be a reductive algebraic group over a field $K$ with
algebraic closure $\overline{K}$. An element $x\in G=\bfG(\overline{K})$
is called \emph{regular} if $\dim C_{G}(x)$ is minimal, and it is
known that this minimal dimension equals the rank $\rk G$ (see \cite{Steinberg-regular}
and \cite[Section~14]{dignemichel}). Similarly, if $\mfg$ is the
Lie algebra of $\bfG$ an element $X\in\mfg(\overline{K})$ is called
\emph{regular} if $\dim C_{G}(X)=\rk G$, where $G$ acts on $\mfg$
via the adjoint action. In the case $\bfG=\GL_{n}$ there are several
equivalent characterisations of regular elements in $\mfg(K)=\M_{n}(K)$.
More precisely, the following is well-known:
\begin{prop}
\label{prop:Reg-fields}Let $K$ be a field and $X\in\M_{n}(K)$.
Then the following is equivalent
\begin{enumerate}
\item \label{enu:reg-fields 1}$X$ is regular,
\item \label{enu:reg-fields 5-1}There exists a vector $v\in K^{n}$ such
that $\{v,Xv,\dots,X^{n-1}v\}$ is a basis for $K^{n}$ over $K$,
\item \label{enu:reg-fields 5}The set $\{1,X,\dots,X^{n-1}\}$ is linearly
independent over $K$,
\item \label{enu:reg-fields 1-1}$X$ is similar to its companion matrix
$C$ as well as to $C^{T}$,
\item \label{enu:reg-fields 2}$C_{\M_{n}(K)}(X)=K[X]$.
\end{enumerate}
\end{prop}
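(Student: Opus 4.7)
The plan is to take as pivot the assertion that $K^{n}$ is a cyclic $K[X]$-module, show that this is equivalent to each of (ii)--(v), and then to deduce the equivalence with (i) via a dimension comparison. All ingredients reduce either to elementary matrix manipulations or to the structure theorem for finitely generated modules over the PID $K[x]$.

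First I would prove the equivalence of (ii)--(v). For (ii) $\Leftrightarrow$ (iii): if there is a $v$ as in (ii), then any relation $\sum_{i}c_{i}X^{i}=0$ applied to $v$ forces $c_{0}=\dots=c_{n-1}=0$, giving (iii); conversely, (iii) says that the minimal polynomial $m_{X}$ of $X$ has degree $n$ and hence equals the characteristic polynomial, so by the structure theorem $K^{n}$ is cyclic as a $K[X]$-module, furnishing the required $v$. For (ii) $\Leftrightarrow$ (iv): writing $X$ in the ordered basis $(v,Xv,\dots,X^{n-1}v)$ sends the first $n-1$ of these to the subsequent ones and sends $X^{n-1}v$ to the combination dictated by Cayley--Hamilton, so the resulting matrix is precisely $C^{T}$; conversely, if $gXg^{-1}=C^{T}$ then $v=ge_{1}$ does the job. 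Since over a field every square matrix is similar to its transpose, $X$ is similar to $C^{T}$ if and only if it is similar to $C$. For (ii) $\Rightarrow$ (v): the inclusion $K[X]\subseteq C_{\M_{n}(K)}(X)$ is automatic, and given $Y$ commuting with $X$, write $Yv=p(X)v$ with $\deg p<n$; then $YX^{i}v=X^{i}Yv=p(X)X^{i}v$ for all $i$, so $Y=p(X)$ on a basis and hence globally.

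To close the loop and bring in (i), I would use a dimension count. Since $\dim_{K}K[X]=\deg m_{X}$ and the general lower bound $\dim_{K}C_{\M_{n}(K)}(X)\geq n$ holds, the inclusion $K[X]\subseteq C_{\M_{n}(K)}(X)$ is an equality exactly when $\deg m_{X}=n$. This yields (v) $\Rightarrow$ (iii) and completes the circle among (ii)--(v). For (i), note that $C_{\bfG}(X)$ is the principal open subvariety of the affine space $C_{\M_{n}}(X)$ cut out by the non-vanishing of $\det$, so the two have the same dimension; since $\rk\GL_{n}=n$, regularity of $X$ is the statement that this common dimension equals $n$, and by the preceding discussion this occurs if and only if $K[X]=C_{\M_{n}(K)}(X)$. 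The one non-elementary step, and the main obstacle, is the lower bound $\dim C_{\M_{n}(K)}(X)\geq n$; for this I would invoke the rational canonical form, which expresses the centraliser dimension as $\sum_{i}(2i-1)n_{i}$ in terms of the invariant factor degrees $n_{1}\geq\dots\geq n_{r}$ summing to $n$, and this quantity is minimised precisely when $r=1$, i.e.\ when $X$ is cyclic.
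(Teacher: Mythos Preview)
The paper does not actually prove this proposition: it introduces it with ``the following is well-known'' and gives no argument, so there is no original proof to compare against. Your proposal is a correct and standard proof of the equivalences, organised cleanly around the cyclicity of $K^{n}$ as a $K[X]$-module and the centraliser dimension formula $\sum_{i}(2i-1)n_{i}$.

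One small slip: in your (iv) $\Rightarrow$ (ii) step, if $gXg^{-1}=C^{T}$ then the cyclic vector is $v=g^{-1}e_{1}$, not $v=ge_{1}$; with the former one gets $gX^{i}v=(C^{T})^{i}e_{1}=e_{i+1}$, so $\{v,Xv,\dots,X^{n-1}v\}$ is indeed a basis. Also, since the paper defines regularity via $\dim C_{G}(X)$ with $G=\bfG(\overline{K})$, it would be worth making explicit that $\dim_{K}C_{\M_{n}(K)}(X)=\dim_{\overline{K}}C_{\M_{n}(\overline{K})}(X)$ (the centraliser condition is linear, so base change preserves the dimension); you implicitly use this when passing between the algebraic-group definition of (i) and the linear-algebra computation over $K$.
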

Regular elements of $\M_{n}(K)$ are sometimes called \emph{non-derogatory
}or\emph{ cyclic.} For matrices over arbitrary rings we make the following
definition.
\begin{defn}
Let $R$ be a ring. A matrix $X\in\M_{n}(R)$ is called \emph{regular}
if there exists a vector $v\in R^{n}$ such that $\{v,Xv,\dots,X^{n-1}v\}$
is a basis for $R^{n}$ over $R$.\end{defn}
\begin{prop}
\label{prop:Reg-rings}Let $R$ be a ring and $X\in\M_{n}(R)$. Then
the following is equivalent
\begin{enumerate}
\item \label{enu:reg-rings 1}$X$ is regular,
\item \label{enu:reg-rings 2-1}$X$ is similar to its companion matrix
$C$ as well as to $C^{T}$,
\item \label{enu:reg-rings 3}$C_{\M_{n}(R)}(X)=R[X]$.
\end{enumerate}
\end{prop}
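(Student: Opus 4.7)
The plan is to establish the cycle (i) $\Rightarrow$ (ii) $\Rightarrow$ (i) together with (i) $\Rightarrow$ (iii), and then to tackle (iii) $\Rightarrow$ (i) by reducing to the field case of Proposition~\ref{prop:Reg-fields}.

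For (i) $\Rightarrow$ (ii): given a vector $v$ such that $\{v, Xv, \ldots, X^{n-1}v\}$ is a basis, I assemble these into the columns of a matrix $g \in \GL_n(R)$. Since Cayley--Hamilton holds over any commutative ring, $X^n v = -\sum_{i=0}^{n-1} a_i X^i v$ where the $a_i$ are the coefficients of the characteristic polynomial of $X$; computing $Xg$ column by column then yields $Xg = gC^T$, so $g^{-1}Xg = C^T$. To also obtain $X \sim C$, I invoke the classical fact that any companion matrix is conjugate to its transpose over any commutative ring, via an explicit Hankel-type matrix of determinant $\pm 1$. The reverse (ii) $\Rightarrow$ (i) is immediate: if $g^{-1}Xg = C^T$, set $v = g e_1$; then $X^i v = g(C^T)^i e_1 = g e_{i+1}$ for $0 \le i \le n-1$, so the iterates of $v$ are exactly the columns of $g$ and thus form a basis.

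For (i) $\Rightarrow$ (iii): the inclusion $R[X] \subseteq C_{\M_n(R)}(X)$ is automatic. Conversely, given $Y \in C_{\M_n(R)}(X)$ and a cyclic vector $v$, use the basis property to write $Yv = p(X)v$ for some $p \in R[t]$ of degree less than $n$. Then $YX^iv = X^iYv = X^ip(X)v = p(X)X^iv$ for all $i$, so $Y$ and $p(X)$ agree on the whole basis, whence $Y = p(X)$.

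The main obstacle is (iii) $\Rightarrow$ (i), which I would attack by reducing to a local ring and then to its residue field. Since $C_{\M_n(R)}(X) = \ker(\mathrm{ad}_X)$ and $R[X]$ both commute with localization (the former by exactness, the latter by finite generation), hypothesis (iii) passes to localizations; and cyclicity of the finitely generated $R[X]$-module $R^n$ is itself a local property of $R$, via the formula $\mu(M) = \sup_{\mathfrak m} \mu(M_{\mathfrak m})$ for the minimal number of generators. So I may assume $R$ is local with residue field $k$. The key—and most delicate—step is to propagate (iii) to the residue field, showing $C_{\M_n(k)}(\overline X) = k[\overline X]$: combining the general bound $\dim_k C_{\M_n(k)}(\overline X) \ge n$ (valid for any matrix over any field), the upper bound $\dim_k k[\overline X] \le n$ from Cayley--Hamilton, and a careful analysis of how the reduction map $\M_n(R) \to \M_n(k)$ interacts with $C_{\M_n(R)}(X) = R[X]$, one forces equality of $k$-dimensions at $n$ and hence equality of these two subspaces. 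Proposition~\ref{prop:Reg-fields} then furnishes a cyclic vector $\overline v \in k^n$ for $\overline X$, and any lift $v \in R^n$ gives a cyclic vector for $X$: by Nakayama's lemma the reductions of $\{v, Xv, \ldots, X^{n-1}v\}$ form a basis of $k^n$, so these $n$ elements generate $R^n$, and any $n$ generators of the free $R$-module $R^n$ constitute a basis, by the standard fact that surjective endomorphisms of finitely generated modules over a commutative ring are isomorphisms.
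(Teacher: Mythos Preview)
The paper gives no proof of this proposition at all; it simply asserts that ``the proof of Proposition~\ref{prop:Reg-rings} is the same as in the classical case of matrices over fields.'' Your arguments for (i)\,$\Leftrightarrow$\,(ii) and (i)\,$\Rightarrow$\,(iii) are correct and are indeed the classical ones carried over verbatim to arbitrary commutative rings.

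The direction (iii)\,$\Rightarrow$\,(i), however, has a genuine gap in your sketch. Your reduction to local $R$ is fine (localization is flat, so $\ker(\mathrm{ad}_X)$ and $R[X]$ both localize correctly; and your argument that cyclicity of $R^n$ as $R[X]$-module can be checked locally on $\Spec R$ goes through since $R[X]$ is module-finite over $R$). The problem is the passage from local $R$ to its residue field $k$. You write that ``a careful analysis of how the reduction map $\M_n(R)\to\M_n(k)$ interacts with $C_{\M_n(R)}(X)=R[X]$'' forces $\dim_k C_{\M_n(k)}(\overline X)=\dim_k k[\overline X]=n$, but no such analysis is given, and the obvious approach fails: reduction modulo $\mathfrak m$ is \emph{not} flat, so $\ker(\mathrm{ad}_X)\otimes_R k$ need not equal $\ker(\mathrm{ad}_{\overline X})$. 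Concretely, an element $\overline Y\in C_{\M_n(k)}(\overline X)\setminus k[\overline X]$ lifts only to some $Y$ with $[X,Y]\in\mathfrak m\,\M_n(R)$, and there is no evident way to correct $Y$ to land in $C_{\M_n(R)}(X)$. Thus the inequalities $\dim_k C_{\M_n(k)}(\overline X)\ge n$ and $\dim_k k[\overline X]\le n$ alone do not combine with the hypothesis to yield equality; an additional idea is needed, and you have not supplied one. In fairness, the paper's one-line dismissal obscures the same difficulty: the field-case proof of (iii)\,$\Rightarrow$\,(i) uses dimension counting, which has no direct analogue over a general ring, so it is not literally ``the same''. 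Note also that the paper only ever uses the easy implications (i)\,$\Rightarrow$\,(ii) and (i)\,$\Rightarrow$\,(iii) downstream.
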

The proof of Proposition \ref{prop:Reg-rings} is the same as in the
classical case of matrices over fields. In the following we will use
the properties of regular elements expressed in Propositions~\ref{prop:Reg-fields}
and \ref{prop:Reg-rings} without explicit reference.

If $\phi:R\to S$ is a homomorphism of rings we also use $\phi$ to
denote the induced homomorphism $\M_{n}(R)\rightarrow\M_{n}(S)$.
\begin{lem}
\label{lem:Reg-extnscalars}Let $\phi:R\to S$ be a homomorphism of
rings. If $X\in\M_{n}(R)$ is regular, then $\phi(X)$ is regular. \end{lem}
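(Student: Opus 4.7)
The plan is to use the definition of regular directly: we have a vector $v \in R^{n}$ such that $\{v, Xv, \dots, X^{n-1}v\}$ is an $R$-basis of $R^{n}$, and we will show that $\phi(v) \in S^{n}$ witnesses regularity of $\phi(X)$, i.e.\ that $\{\phi(v), \phi(X)\phi(v), \dots, \phi(X)^{n-1}\phi(v)\}$ is an $S$-basis of $S^{n}$.

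First I would note that $\phi$, applied entry-wise, is compatible with matrix multiplication and matrix-vector products, so $\phi(X^{i} v) = \phi(X)^{i}\phi(v)$ for each $i \geq 0$. Next, form the matrix $M \in \M_{n}(R)$ whose $i$-th column is $X^{i-1}v$. Saying that $\{v, Xv, \dots, X^{n-1}v\}$ is a basis of $R^{n}$ is precisely the statement that $M \in \GL_{n}(R)$, equivalently $\det(M) \in R^{\times}$.

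Now apply $\phi$ to $M$: the matrix $\phi(M) \in \M_{n}(S)$ has $i$-th column $\phi(X)^{i-1}\phi(v)$, and $\det(\phi(M)) = \phi(\det(M))$ since the determinant is a polynomial expression in the entries. Because any ring homomorphism sends units to units (from $\phi(1)=1$), $\phi(\det(M)) \in S^{\times}$, so $\phi(M) \in \GL_{n}(S)$. Hence its columns form a basis of $S^{n}$, which is exactly what we need.

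There is no real obstacle here; the argument is essentially a direct translation of the definition, with the only subtlety being the observation that ring homomorphisms preserve units (applied to the determinant) rather than needing any structural result such as Proposition~\ref{prop:Reg-rings}. In particular no hypothesis on $R$ or $S$ beyond being rings is required.
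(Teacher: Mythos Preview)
Your proof is correct. Both you and the paper follow the same overall strategy: take a cyclic vector $v$ for $X$ and show that $\phi(v)$ is cyclic for $\phi(X)$. The difference is in the verification that $\{\phi(X)^{i}\phi(v)\}$ is a basis of $S^{n}$. You package the basis as an invertible matrix $M$ and use that ring homomorphisms preserve determinants and units, so $\phi(M)\in\GL_{n}(S)$. The paper instead invokes extension of scalars: a basis of $R^{n}$ yields a basis of $R^{n}\otimes_{R}S\cong S^{n}$, and then identifies $X^{i}v\otimes 1$ with $\phi(X)^{i}\phi(v)$. Your route is more elementary and self-contained (no tensor products needed), while the paper's phrasing fits naturally into the general slogan that regularity is preserved under base change; for this lemma the two arguments are interchangeable.
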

\begin{proof}
Suppose that $X$ is regular. By definition there exists a vector
$v\in R^{n}$ such that $\{v,Xv,\dots,X^{n-1}v\}$ is an $R$-basis
for $R^{n}$. Then $\{v\otimes1,Xv\otimes1,\dots,X^{n-1}v\otimes1\}$
is an $S$-basis for $R^{n}\otimes_{R}S$ (cf.~\cite[XVI, Proposition~2.3]{Lang-Algebra}).
Let $\phi(v)\in S^{n}$ be the image of $v$ under component-wise
application of $\phi$. Under the isomorphism $R^{n}\otimes_{R}S\rightarrow S^{n}$,
the elements $X^{i}v\otimes1$ are sent to $\phi(X)^{i}\phi(v)$,
so $\{\phi(v),\phi(X)\phi(v),\dots,\phi(X)^{n-1}\phi(v)\}$ is a basis
for $S^{n}$. Thus $\phi(X)$ is regular.
\end{proof}
Let $R$ be a ring and $X\in\M_{n}(R)$. If $\mfp$ is an ideal of
$R$ we use $X_{\mfp}$ to denote the image of $X$ under the canonical
map $\pi:\M_{n}(R)\to\M_{n}(R/\mfp$), that is, $X_{\mfp}=\pi(X)$.
For a general ring $R$ an element in $\M_{n}(R)$ which is regular
modulo every maximal ideal may not be regular. However, if $R$ is
a local ring, the situation is favourable:
\begin{lem}
\label{lem:Reg-locring}Assume that $R$ is a local ring with maximal
ideal $\mfm$. Then $X\in\M_{n}(R)$ is regular if and only if $X_{\mfm}\in\M_{n}(R/\mfm)$
is regular.\end{lem}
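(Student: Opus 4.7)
The forward direction is immediate: applying Lemma~\ref{lem:Reg-extnscalars} to the quotient homomorphism $\pi:R\to R/\mfm$ shows that if $X$ is regular then so is $X_{\mfm}$. The substantive content is the converse, and the plan is to lift a cyclic vector from $R/\mfm$ to $R$ and exploit the fact that in a local ring every element outside $\mfm$ is a unit.

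Concretely, assume $X_{\mfm}\in\M_{n}(R/\mfm)$ is regular. By the definition of regularity there exists $\bar v\in(R/\mfm)^{n}$ such that the family $\{\bar v,X_{\mfm}\bar v,\dots,X_{\mfm}^{n-1}\bar v\}$ is an $(R/\mfm)$-basis of $(R/\mfm)^{n}$. Pick any lift $v\in R^{n}$ of $\bar v$ (take component-wise preimages), and form the matrix
\[
M=\bigl(v\ \ Xv\ \ X^{2}v\ \ \cdots\ \ X^{n-1}v\bigr)\in\M_{n}(R),
\]
whose columns are the candidate basis vectors. Its reduction $M_{\mfm}=\pi(M)$ has columns $\bar v,X_{\mfm}\bar v,\dots,X_{\mfm}^{n-1}\bar v$, which form a basis of $(R/\mfm)^{n}$. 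Hence $\det(M_{\mfm})=\pi(\det M)$ is a unit in $R/\mfm$, so in particular $\det(M)\notin\mfm$.

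Here is where the local hypothesis is used: because $R$ is local with maximal ideal $\mfm$, any element of $R\setminus\mfm$ is a unit, so $\det(M)\in R^{\times}$. Therefore $M\in\GL_{n}(R)$, which means its columns $\{v,Xv,\dots,X^{n-1}v\}$ form an $R$-basis of $R^{n}$, and $X$ is regular by definition.

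There is no real obstacle here; the only point requiring care is the passage from $\det(M)\notin\mfm$ to $\det(M)\in R^{\times}$, which is precisely the defining property of a local ring and the reason the statement fails for a general ring, where being regular modulo every maximal ideal is strictly weaker than being regular.
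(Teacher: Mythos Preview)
Your proof is correct and follows the same outline as the paper's: lift a cyclic vector for $X_{\mfm}$ to $R^{n}$ and then use the local hypothesis to conclude that the lifted vectors form a basis. The only difference is in the final step: the paper invokes Nakayama's lemma to deduce $R^{n}=R[X]\hat v$ from $R^{n}=R[X]\hat v+\mfm R^{n}$, whereas you argue directly via the determinant of the matrix $M$ of column vectors, using that $\det M\notin\mfm$ forces $\det M\in R^{\times}$. Your argument is slightly more concrete and avoids the (mild) extra step of noting that $n$ generators of a free rank-$n$ module must be a basis, since invertibility of $M$ gives this immediately.
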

\begin{proof}
If $X$ is regular, then $X_{\mfm}$ is regular by Lemma~\ref{lem:Reg-extnscalars}.
Conversely, suppose that $X_{\mfm}$ is regular and choose $v\in(R/\mfm)^{n}$
such that $(R/\mfm)^{n}=(R/\mfm)[X_{\mfm}]v$. Let $\hat{v}\in R^{n}$
be a lift of $v$. Then $R^{n}=R[X]\hat{v}+\mfm M$ for some submodule
$M$ of $R^{n}$, and Nakayama's lemma yields $R^{n}=R[X]\hat{v}$
, so $X$ is regular.\end{proof}
\begin{prop}
\label{prop:Reg-mod-m}Let $R$ be an integral domain with field of
fractions $F$, and let $X\in\M_{n}(R)$. If $X_{\mfm}$ is regular
for some maximal ideal $\mfm$ of $R$, then $X$ is regular as an
element of $\M_{n}(F)$. \end{prop}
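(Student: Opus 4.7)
The plan is to lift a cyclic vector from the residue field, package the generated vectors as the columns of a matrix over $R$, and show that its determinant is nonzero by reducing modulo $\mfm$. Linear independence over $F$ will then follow automatically since $R$ embeds into $F$.

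More precisely, since $X_{\mfm}\in\M_{n}(R/\mfm)$ is regular and $R/\mfm$ is a field, by Proposition~\ref{prop:Reg-fields}\,\ref{enu:reg-fields 5-1} there is some $\bar{v}\in(R/\mfm)^{n}$ such that $\{\bar{v},X_{\mfm}\bar{v},\dots,X_{\mfm}^{n-1}\bar{v}\}$ is a basis of $(R/\mfm)^{n}$. Choose any lift $\hat{v}\in R^{n}$ of $\bar{v}$, and form the matrix
\[
M=\bigl[\hat{v}\mid X\hat{v}\mid\cdots\mid X^{n-1}\hat{v}\bigr]\in\M_{n}(R).
\]
Reducing mod $\mfm$ gives the matrix whose columns form a basis of $(R/\mfm)^{n}$, so $\det(M)\bmod\mfm$ is a unit in $R/\mfm$, and in particular $\det(M)\neq0$ in $R$. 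Because $R$ is an integral domain and embeds into its field of fractions $F$, the image of $\det(M)$ in $F$ is also nonzero.

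Therefore $\hat{v}$, viewed in $F^{n}$, satisfies the property that $\{\hat{v},X\hat{v},\dots,X^{n-1}\hat{v}\}$ is linearly independent over $F$, and hence (being $n$ vectors in an $n$-dimensional space) a basis for $F^{n}$. By the definition of regular element applied over the field $F$, this shows that $X$ is regular as an element of $\M_{n}(F)$.

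There is no real obstacle here: the only subtle point to keep in mind is that reducing mod $\mfm$ produces a determinant that is a \emph{unit} in $R/\mfm$, which is much stronger than what we need in $F$, where we only need the determinant to be nonzero. Note also that we cannot in general expect $X$ itself to be regular as an element of $\M_{n}(R)$; the assumption that $X_{\mfm}$ is regular only guarantees regularity after extending scalars to $F$.
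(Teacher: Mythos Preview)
Your proof is correct and takes a genuinely different route from the paper. The paper proceeds by localising at $\mfm$: it invokes Lemma~\ref{lem:Reg-locring} (which rests on Nakayama's lemma) to conclude that the image of $X$ in $\M_n(R_{\mfm})$ is regular, and then uses the characterisation of regularity over a field via linear independence of $\{1,X,\dots,X^{n-1}\}$ (Proposition~\ref{prop:Reg-fields}\,\ref{enu:reg-fields 5}), clearing denominators to pass from $R$ to $F$. Your argument is more direct and more elementary: by lifting a cyclic vector and looking at the determinant of the resulting change-of-basis matrix, you bypass both the localisation and Nakayama entirely. The paper's route does give the slightly stronger intermediate conclusion that $X$ is regular over the local ring $R_{\mfm}$, but this is not used elsewhere, so your shortcut loses nothing for the purposes of the paper.
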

\begin{proof}
Suppose that $X_{\mfm}$ is regular for some maximal ideal $\mfm$
of $R$. Let $R_{\mfm}$ be the localisation of $R$ at $\mfm$, and
let $j:R\to R_{\mfm}$ be the canonical homomorphism. Since the diagram
\[
\xymatrix{R\ar[d]\ar[r]^{j} & R_{\mfm}\ar[d]\\
R/\mfm\ar[r]\sp-{\cong} & R_{\mfm}/\mfm
}
\]
commutes, Lemma~\ref{lem:Reg-locring} implies that $j(X)$ is regular.
If $\sum_{i=0}^{n-1}r_{i}X^{i}=0$ for some $r_{i}\in R$, then $\sum_{i=0}^{n-1}j(r_{i})j(X)^{i}=0$.
But since $j(X)$ is regular, we must have $j(r_{i})=0$ for all $i=0,\dots,n-1$.
Since $R$ is an integral domain $j$ is injective, so $r_{i}=0$
for $i=0,\dots,n-1$. Now, if $\sum_{i=0}^{n-1}s_{i}X^{i}=0$ for
some $s_{i}\in F$, then clearing denominators shows that $s_{i}=0$
for all $i=0,\dots,n-1$. Hence, by Proposition~\ref{prop:Reg-fields}~\ref{enu:reg-fields 5}
the matrix $X$ is regular as an element of $\M_{n}(F)$.
\end{proof}
The following result has appeared in \cite[Proposition~6]{Vaserstein-Wheland}.
\begin{lem}
\label{lem:reg-triang}Let $R$ be an arbitrary ring and $A=(a_{ij})\in\M_{n}(R)$
a matrix such that $a_{i,i+1}=1$ for all $1\leq i\leq n$ and $a_{ij}=0$
for all $j\geq i+2$. Then $A$ is regular. \end{lem}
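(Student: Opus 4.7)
The plan is to produce a cyclic vector for $A$ explicitly by taking $v=e_{n}$, the last standard basis vector. The shape of $A$ gives, for each $2\leq j\leq n$, the column formula
\[
Ae_{j}=e_{j-1}+\sum_{i=j}^{n}a_{ij}e_{i},
\]
so $Ae_{j}\equiv e_{j-1}\pmod{\mathrm{span}_{R}(e_{j},\ldots,e_{n})}$. The heart of the proof is that this forces $v,Av,A^{2}v,\ldots$ to walk backwards through the standard basis in a controlled way.

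By a short induction on $k=0,1,\ldots,n-1$ I would establish
\[
A^{k}e_{n}=e_{n-k}+\sum_{j>n-k}\lambda_{j,k}e_{j}
\]
for suitable $\lambda_{j,k}\in R$. The base case $k=0$ is trivial. In the inductive step, applying $A$ to the right-hand side yields $Ae_{n-k}=e_{n-k-1}+(\text{stuff in }e_{n-k},\ldots,e_{n})$ from the leading term, while each remaining contribution $\lambda_{j,k}Ae_{j}$ with $j>n-k$ is supported on $\{e_{j-1},e_{j},\ldots,e_{n}\}\subseteq\{e_{n-k},\ldots,e_{n}\}$ and is therefore absorbed into the tail; the only surviving leading term is $e_{n-k-1}$, closing the induction.

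Once this formula is in hand, let $M$ be the $n\times n$ matrix over $R$ whose $k$-th column is $A^{k-1}e_{n}$. The formula shows that $M_{ij}=0$ whenever $i+j\leq n$ and $M_{n-j+1,j}=1$, so $M$ has zeros strictly above its anti-diagonal and $1$'s on the anti-diagonal itself. Reversing the order of rows turns $M$ into an upper triangular matrix with unit diagonal, equivalently the only permutation $\sigma$ with $\sigma(j)+j\geq n+1$ for all $j$ is $\sigma(j)=n-j+1$; the Leibniz expansion therefore gives $\det M=\pm 1\in R^{\times}$. Consequently $\{e_{n},Ae_{n},\ldots,A^{n-1}e_{n}\}$ is an $R$-basis for $R^{n}$, so $A$ is regular by definition. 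There is no serious obstacle; the only step requiring care is the inductive identity for $A^{k}e_{n}$, which is a mechanical unwinding of the column formula above.
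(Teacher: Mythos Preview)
Your proof is correct and follows essentially the same idea as the paper's: exhibit an explicit cyclic vector and check that the matrix whose columns are $v,Av,\ldots,A^{n-1}v$ is unitriangular (up to a row/column reversal), hence invertible over any ring. The paper uses $e_{1}$ and claims $(e_{1},Ae_{1},\ldots,A^{n-1}e_{1})$ is upper triangular with $1$'s on the diagonal; for the hypotheses as literally stated (superdiagonal $1$'s, zeros above), your choice $v=e_{n}$ is actually the one that works, and the paper's $e_{1}$ is a small slip (it is correct for the transposed hypothesis, which is how the lemma is applied later in the paper). Apart from this cosmetic point, the two arguments are the same.
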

\begin{proof}
Let $\{e_{1}=(1,0,\dots,0)^{T},e_{2}=(0,1,0,\dots,0)^{T},\dots,e_{n}=(0,\dots,0,1)^{T}\}$
be the standard basis for $R^{n}$. Then the matrix
\[
B=(e_{1},Ae_{1},\dots,A^{n-1}e_{1})
\]
is upper triangular with $1$s on the diagonal, so $B\in\SL_{n}(R)$.
Now for $1\leq i\leq n-1$ we have
\[
B^{-1}ABe_{i}=B^{-1}A^{i}e_{1}=e_{i+1}
\]
(since $Be_{i+1}=A^{i}e_{1}$ ). Thus $B^{-1}AB$ is a companion matrix,
and so $A$ is regular.
\end{proof}

\section{\label{sec:LF-criterion}\foreignlanguage{british}{The criterion
of Laffey and Reams }}

\selectlanguage{british}%
Throughout this section $R$ is a PID and $F$ its field of fractions.
In Theorem~\ref{prop:Criterion} we give a criterion for a matrix
in $\M_{n}(R)$ to be a commutator discovered by Laffey and Reams
\cite[Section~3]{Laffey-Reams}. This criterion plays an important
role in our proof of the main theorem. Laffey and Reams proved the
criterion for matrices over fields and over $\Z$, and we only need
minor modifications of their proofs, together with Proposition~\ref{prop:Reg-mod-m},
to prove it over arbitrary PIDs.

The following result is from \cite[Section~3]{Laffey-Reams}. We reproduce
the proof here for completeness.
\begin{prop}
\label{prop:LF-criterion-fields}Let $K$ be a field and $X\in\M_{n}(K)$
be regular. Let $A\in\M_{n}(K)$. Then $A=[X,Y]$ for some $Y\in\M_{n}(K)$
if and only if \foreignlanguage{english}{$\Tr(X^{r}A)=0$ for all
$r=0,\dots,n-1$.}\end{prop}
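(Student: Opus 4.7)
The plan is to prove the two implications separately. The forward direction, that $A = [X, Y]$ forces $\Tr(X^r A) = 0$ for all $r \geq 0$, is immediate from cyclic invariance of the trace:
\[
\Tr(X^r(XY - YX)) = \Tr(X^{r+1} Y) - \Tr(X^r Y X) = 0.
\]
So all the content lies in the converse.

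For the converse I would consider the $K$-linear endomorphism $\phi_X$ of $\M_n(K)$ defined by $\phi_X(Y) = [X, Y]$, and aim to show $A \in \phi_X(\M_n(K))$. By Proposition~\ref{prop:Reg-fields}, regularity of $X$ gives $\ker \phi_X = C_{\M_n(K)}(X) = K[X]$, which has $K$-basis $\{1, X, \dots, X^{n-1}\}$; in particular, by rank-nullity $\dim_K \phi_X(\M_n(K)) = n^2 - n$. The cleanest way to pin down the image is via the nondegenerate symmetric bilinear form $\langle A, B\rangle = \Tr(AB)$ on $\M_n(K)$: cyclicity of trace yields
\[
\langle \phi_X(Y), A\rangle = \Tr(XYA) - \Tr(YXA) = \Tr(Y[A, X]) = -\langle Y, \phi_X(A)\rangle,
\]
so $\phi_X$ is skew-adjoint with respect to $\langle-,-\rangle$. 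Standardly this forces $\phi_X(\M_n(K)) = (\ker \phi_X)^{\perp} = K[X]^{\perp}$, which is exactly the set of $A$ with $\Tr(X^r A) = 0$ for $r = 0, \dots, n-1$, as desired.

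The main obstacle I expect is the step ``skew-adjoint implies image equals orthogonal complement of kernel''. This is a general fact for a skew-adjoint endomorphism of a finite-dimensional space with a nondegenerate bilinear form, but if one wants to bypass it one can simply count dimensions: the inclusion $\phi_X(\M_n(K)) \subseteq K[X]^{\perp}$ is a restatement of the forward direction (applied with $P \in K[X]$ in place of $X^r$), both sides have dimension $n^2 - n$, so equality follows. A more hands-on alternative, available thanks to Proposition~\ref{prop:Reg-fields}\,\ref{enu:reg-fields 1-1}, is to replace $X$ by its companion matrix $C$ and solve $[C, Y] = A$ directly, verifying that the $n$ obstructions appearing match the $n$ trace conditions.
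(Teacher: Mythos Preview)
Your proposal is correct and essentially identical to the paper's proof: the paper computes $\dim V = n^2 - n$ (where $V$ is your $K[X]^{\perp}$), computes $\dim [X,\M_n(K)] = n^2 - n$ via the centraliser, verifies the inclusion $[X,\M_n(K)] \subseteq V$ using the forward direction, and concludes equality --- precisely your ``count dimensions'' alternative. Your skew-adjointness packaging is a mild conceptual rewording of the same argument, not a genuinely different route.
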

\selectlanguage{english}%
\begin{proof}
Since $\{1,X,\dots,X^{n-1}\}$ is linearly independent over $K$ the
subspace
\[
V=\{A\in\M_{n}(K)\mid\Tr(X^{r}A)=0\text{ for }0,1,\dots,n-1\}
\]
has dimension $n^{2}-n$. The kernel of the linear map $\M_{n}(R)\rightarrow\M_{n}(R)$,
$Y\mapsto[X,Y]$ is equal to the centraliser $C_{\M_{n}(K)}(X)$,
which has dimension $n$ since $X$ is regular. Thus the image $[X,\M_{n}(K)]$
of the map $Y\mapsto[X,Y]$ has dimension $n^{2}-n$. But if $A\in[X,\M_{n}(K)]$
there exists a $Y\in\M_{n}(K)$ such that for every $r=0,1,\dots,n-1$
we have
\[
\Tr(X^{r}A)=\Tr(X^{r}(XY-YX))=\Tr(X^{r+1}Y)-\Tr(X^{r}YX)=0.
\]
Thus $A\in V$ and so $[X,\M_{n}(K)]\subseteq V$. Since $\dim V=\dim[X,\M_{n}(K)]$
we conclude that $V=[X,\M_{n}(K)]$. \end{proof}
\selectlanguage{british}%
\begin{prop}
\label{prop:LF-XYM}Let $X\in\M_{n}(R)$ be such that $X_{\mfp}$
is regular for every maximal ideal $\mfp$ in $R$. Suppose that $M\in\M_{n}(F)$
is such that $[X,M]\in\M_{n}(R)$. Then there exists an $Y\in\M_{n}(R)$
such that $[X,M]=[X,Y]$.\end{prop}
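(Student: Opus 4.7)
The core idea is to adjust $M$ by an element of the centraliser so as to land in $\M_{n}(R)$ without changing the commutator with $X$. Since $X_{\mfp}$ is regular for every (and in particular for some) maximal ideal $\mfp$, Proposition~\ref{prop:Reg-mod-m} implies $X$ is regular in $\M_{n}(F)$; hence by Proposition~\ref{prop:Reg-rings}~\ref{enu:reg-rings 3} we have $C_{\M_{n}(F)}(X)=F[X]$. Thus $[X,M]=[X,Y]$ holds iff $M-Y\in F[X]$, so it suffices to find $P\in F[X]$ with $M-P\in\M_{n}(R)$, after which $Y:=M-P$ does the job.

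Let $d\in R$ generate the nonzero ideal $\{r\in R\mid rM\in\M_{n}(R)\}$, and proceed by induction on the number of prime factors of $d$ counted with multiplicity. If $d$ is a unit, $M\in\M_{n}(R)$ already and we take $P=0$. Otherwise pick a prime $p\mid d$, let $\mfp=(p)$ and $v=v_{\mfp}(d)\geq 1$. Since $d/p^{v}$ is a unit in $R_{\mfp}$, the matrix $p^{v}M$ lies in $\M_{n}(R_{\mfp})$, and $[X,p^{v}M]=p^{v}A\in p\,\M_{n}(R_{\mfp})$. Reducing modulo $\mfp R_{\mfp}$ produces $\overline{p^{v}M}\in\M_{n}(R/\mfp)$ commuting with $X_{\mfp}$. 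Since $X_{\mfp}$ is regular, Proposition~\ref{prop:Reg-rings}~\ref{enu:reg-rings 3} gives $\bar{a}_{0},\dots,\bar{a}_{n-1}\in R/\mfp$ with $\overline{p^{v}M}=\sum_{j}\bar{a}_{j}X_{\mfp}^{j}$. I would then lift each $\bar{a}_{j}$ to $a_{j}\in R$ and set $N=\sum_{j=0}^{n-1}a_{j}X^{j}\in\M_{n}(R)$, so that $p^{v}M-N\in p\,\M_{n}(R_{\mfp})$ and therefore $M':=M-p^{-v}N\in p^{1-v}\M_{n}(R_{\mfp})$.

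Since $N\in R[X]$ commutes with $X$, we have $[X,M']=[X,M]=A$. At any other prime $\mfp'\neq\mfp$, the element $p$ is a unit in $R_{\mfp'}$, so $p^{-v}N\in\M_{n}(R_{\mfp'})$ and the $\mfp'$-denominator of $M'$ is at most that of $M$. Consequently the generator of the denominator ideal of $M'$ properly divides $d$, and the inductive hypothesis applied to $M'$ yields $Y\in\M_{n}(R)$ with $[X,Y]=A$.

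The only delicate point is the choice to lift the coefficients $\bar{a}_{j}$ to $R$ rather than merely to $R_{\mfp}$: this is precisely what prevents the correction term $p^{-v}N$ from introducing new denominators at primes distinct from $\mfp$, so that $v_{\mfp}$ strictly decreases while no $v_{\mfp'}$ can grow, making the induction terminate (in finitely many steps, since $d$ has only finitely many prime factors in the PID $R$).
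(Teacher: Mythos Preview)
Your argument is correct and follows essentially the same strategy as the paper: reduce modulo a prime dividing the obstruction, use regularity of $X_{\mfp}$ to write the reduction as a polynomial in $X_{\mfp}$, lift, and thereby strip off a prime factor. The paper packages this slightly more economically by working with an integral matrix $C\in\M_{n}(R)$ satisfying $[X,C]=d\,[X,M]$ and arguing by minimality of $d$ directly over $R$, which sidesteps the localisations $R_{\mfp}$ and the tracking of valuations at other primes; but the mathematical content is the same.
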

\selectlanguage{english}%
\begin{proof}
There exists an element $m\in R$ such that $mY\in\M_{n}(R)$, and
we have $[X,mY]=m[X,Y]$. Assume that $d\in R$ is chosen so that
it has the minimal number of irreducible factors with respect to the
property that $[X,C]=d[X,Y]$ for some $C\in\M_{n}(R)$. If $d$ is
a unit we are done, so assume that $p$ is an irreducible factor of
$d$. Then $[X,C]\in p\M_{n}(R)$, so $X_{(p)}$ commutes with $C_{(p)}$.
But since $X_{(p)}$ is regular, we have $C_{(p)}=f(X_{(p)})$, for
some polynomial $f(T)\in R[T]$. Hence $C-f(X)=pD$ for some $D\in\M_{n}(R)$.
But this implies that $[X,C]=[X,pD]=p[X,D]$ and thus $(dp^{-1})[X,Y]=[X,D]$,
giving a contradiction to our choice of $d$. Hence $d$ is a unit
and so $[X,Y]=[X,M]$ with $M=d^{-1}C\in\M_{n}(R)$.\end{proof}
\begin{prop}
\label{prop:Criterion}Let $A\in\M_{n}(R)$ and let $X\in\M_{n}(R)$
be such that $X_{\mfp}$ is regular for every maximal ideal $\mfp$
in $R$. Then $A=[X,Y]$ for some $Y\in\M_{n}(R)$ if and only if
$\Tr(X^{r}A)=0$ for $r=0,\dots,n-1$.\end{prop}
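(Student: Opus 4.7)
The plan is to deduce Proposition~\ref{prop:Criterion} by combining the field version (Proposition~\ref{prop:LF-criterion-fields}) with the two supporting results already established in this section, namely Proposition~\ref{prop:Reg-mod-m} and Proposition~\ref{prop:LF-XYM}. Essentially, the hypothesis on $X$ (regularity mod every maximal ideal) is tailored to make both of these feed into each other.

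The ``only if'' direction is immediate: if $A=[X,Y]=XY-YX$ with $Y\in\M_{n}(R)$, then for each $r\geq 0$ the cyclicity of the trace gives
\[
\Tr(X^{r}A)=\Tr(X^{r+1}Y)-\Tr(X^{r}YX)=0.
\]

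For the ``if'' direction, assume $\Tr(X^{r}A)=0$ for $r=0,\dots,n-1$. First I would pass to the field of fractions $F$: since $X_{\mfp}$ is regular for at least one (indeed every) maximal ideal $\mfp$, Proposition~\ref{prop:Reg-mod-m} implies that $X$ is regular when viewed as an element of $\M_{n}(F)$. The trace identities $\Tr(X^{r}A)=0$ continue to hold in $F$, so Proposition~\ref{prop:LF-criterion-fields} produces some $M\in\M_{n}(F)$ with $[X,M]=A$.

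Now the task is to upgrade $M$ to an element of $\M_{n}(R)$. Since $A=[X,M]\in\M_{n}(R)$ and $X$ is regular mod every maximal ideal of $R$, Proposition~\ref{prop:LF-XYM} applies verbatim and yields $Y\in\M_{n}(R)$ with $[X,Y]=[X,M]=A$, which completes the argument. The only step that might look like an obstacle is ensuring that the hypothesis ``$X_{\mfp}$ regular for every maximal ideal $\mfp$'' is exactly what is needed to invoke Proposition~\ref{prop:LF-XYM}; but that is precisely how the latter is stated, so there is nothing more to verify.
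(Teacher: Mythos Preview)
Your proof is correct and follows essentially the same route as the paper: the necessary direction is immediate from cyclicity of the trace, and for sufficiency the paper likewise invokes Proposition~\ref{prop:Reg-mod-m} to get regularity over $F$, then Proposition~\ref{prop:LF-criterion-fields} to produce $M\in\M_{n}(F)$, and finally Proposition~\ref{prop:LF-XYM} to replace $M$ by some $Y\in\M_{n}(R)$.
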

\begin{proof}
Clearly the condition $\Tr(X^{r}A)=0$ for all $r\geq0$ is necessary
for $A$ to be of the form $[X,Y]$ with $Y\in\M_{n}(R)$. Conversely,
suppose that $\Tr(X^{r}A)=0$ for $r=0,1,\dots,n-1$. By \foreignlanguage{british}{Proposition~\ref{prop:Reg-mod-m}}
$X$ is regular \foreignlanguage{british}{as an element in $\M_{n}(F)$
so Proposition~\ref{prop:LF-criterion-fields} implies that $A=[X,M]$
for some $M\in\M_{n}(F)$. But now the result follows from Proposition~\ref{prop:LF-XYM}.}
\end{proof}

\section{\label{sec:Comm-fields}Commutators over fields}

Let $K$ be a field. Using the criterion of Laffey and Reams over
fields (Proposition~\ref{prop:LF-criterion-fields}) we give a swift
proof of the theorem of Albert and Muckenhoupt \cite{Albert-Muckenhoupt}
that every matrix with trace zero in $\M_{n}(K)$ is a commutator.

Note that if $R$ is any ring and $A,X,Y\in\M_{n}(R)$ are such that
$A=[X,Y]$, then for every $g\in\GL_{n}(R)$ we have $gAg^{-1}=[gXg^{-1},gYg^{-1}]$.
Thus $A$ is a commutator if and only if any matrix similar to $A$
is.

Let $n\in\N$ with $n\geq2$ and $k=\lfloor n/2\rfloor$. The following
matrices were considered by Laffey and Reams \cite[Section~4]{Laffey-Reams}
who also established the properties stated below.
\[
P_{n}=(p_{ij})=\begin{cases}
p_{ii}=1 & \text{for }i=2,4,\dots,2k,\\
p_{i,i-2}=1 & \text{for }i=3,4,\dots n,\\
p_{ij}=0 & \text{otherwise}.
\end{cases}
\]
Depending on the context we will consider $P_{n}$ as an element of
$\M_{n}(R)$ where $R$ is a ring. For any $m\in\N$ and $a\in R$
we will use $J_{m}(a)$ to denote the $m\times m$ Jordan block with
eigenvalue $a$ and $1$s on the subdiagonal. Over any $R$ the matrix
$P_{n}$ is similar to $J_{k}(1)\oplus J_{n-k}(0)$ (cf.~\cite[p.~681]{Laffey-Reams}),
and thus it is regular by Lemma~\ref{lem:reg-triang}.

For any $A=(a_{ij})\in\M_{n}(R)$ let $c(A)=\sum_{i=1}^{k}a_{2i,2i}$
and $d(A)=\sum_{i=1}^{n-1}a_{i,i+1}$. Suppose now that $R$ is a
PID and that $a_{ij}=0$ for $j\geq i+2$. Observe that for any $r\in\N$,
$P_{n}^{r}$ has the same diagonal as $P_{n}$ and the $(i,j)$ entry
of $P_{n}^{r}$ is $0$ if $i\neq j$ and $i<j+2$. Thus
\begin{equation}
\Tr(P_{n}^{r}A)=c(A),\,\text{ for }r\in\N.\label{eq:Tr_c(A)}
\end{equation}

\begin{prop}
\label{prop:Main-fields}Let $K$ be a field and let $A\in\M_{n}(K)$
be a matrix with trace zero. Then $A=[X,Y]$ for some $X,Y\in\M_{n}(K)$,
where $X$ is regular. More precisely, if $A$ is non-scalar $X$
can be chosen to be conjugate to $P_{n}$, while if $A$ is scalar
we can take $X=J_{n}(0)$.\end{prop}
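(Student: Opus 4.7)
The plan is to apply the Laffey--Reams criterion (Proposition~\ref{prop:LF-criterion-fields}) in both cases, exploiting the standard remark at the start of this section that $A$ is a commutator (with one factor regular and conjugate to a specified matrix) if and only if any matrix similar to $A$ is. For the scalar case $A = \lambda\cdot 1_{n}$, where $n\lambda = \Tr(A) = 0$, I take $X = J_{n}(0)$. This $X$ is regular: its transpose $J_{n}(0)^{T}$ is the companion matrix of $x^{n}$ and so regular by Lemma~\ref{lem:reg-triang}, while $J_{n}(0)$ and $J_{n}(0)^{T}$ are similar since each has $x^{n}$ as its unique invariant factor. Because $X$ is nilpotent, $\Tr(X^{r}A) = \lambda\Tr(X^{r}) = 0$ for $r \geq 1$, and $\Tr(A) = n\lambda = 0$ handles $r = 0$; Proposition~\ref{prop:LF-criterion-fields} then furnishes a $Y$ with $A = [X, Y]$.

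In the non-scalar case, $P_{n}$ is regular (as observed in the preparation, via $P_{n}$ being similar to $J_{k}(1) \oplus J_{n-k}(0)$ and Lemma~\ref{lem:reg-triang}), so it is enough to find $A'$ similar to $A$ with $\Tr(P_{n}^{r}A') = 0$ for $r = 0, \ldots, n-1$: Proposition~\ref{prop:LF-criterion-fields} will then give $A' = [P_{n}, Y']$, and conjugating back yields $A = [X, Y]$ with $X$ conjugate to $P_{n}$. The $r = 0$ case is automatic, and by equation~(\ref{eq:Tr_c(A)}), whenever $A'$ satisfies $a'_{ij} = 0$ for $j \geq i + 2$, the conditions for $r \geq 1$ collapse to the single requirement $c(A') = 0$. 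Thus the non-scalar case reduces to the following normal form claim: every non-scalar $A \in \M_{n}(K)$ with trace zero is similar to a matrix $A'$ satisfying $a'_{ij} = 0$ for $j \geq i + 2$ and $c(A') = 0$.

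To prove this, I would bring $A$ to its rational canonical form $\bigoplus_{i} C_{f_{i}}$, a block diagonal of companion matrices, which is already in the required lower Hessenberg shape. Non-scalarity of $A$ forbids all invariant factors from having degree $1$---the chain $f_{1} \mid \cdots \mid f_{s}$ would otherwise force them equal and hence make $A$ scalar---so at least one companion block $C_{f_{i}}$ has size $n_{i} \geq 2$. Within such a block, conjugation by $1 + cE_{j+1,\, j}$ for $j$ indexing a superdiagonal slot inside the block preserves the lower Hessenberg pattern and, using the superdiagonal $1$s of the companion matrix as transfer rates, shifts trace by $-c$ and $+c$ between the diagonal entries at positions $j$ and $j+1$ while leaving all other diagonals fixed; composing such shifts within a block prescribes any diagonal summing to the block's trace $t_{i}$. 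Conjugating $A$ by a corresponding block diagonal $g = \bigoplus_{i} g_{i}$ therefore modifies each block's diagonal independently. Any block of size $\geq 2$ occupies consecutive positions containing both even and odd indices of $A'$, so its contribution to $c(A')$ is a partial sum over a proper nonempty subset of its $n_{i}$ diagonal entries, which can be tuned to any value in $K$; this single degree of freedom suffices to cancel the (fixed) contributions of the $1 \times 1$ blocks and of any remaining blocks, yielding $c(A') = 0$. The main obstacle is the direct but slightly tedious verification that the elementary conjugations act on the diagonal as claimed while preserving the Hessenberg shape; the essential structural input driving the whole argument is that non-scalarity of $A$ forces at least one invariant factor of degree $\geq 2$.
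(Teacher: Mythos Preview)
Your proposal is correct and follows essentially the same approach as the paper: reduce to lower Hessenberg form via the rational canonical form, then use elementary conjugations exploiting a superdiagonal $1$ to force $c(A')=0$, and apply Proposition~\ref{prop:LF-criterion-fields}. The paper's execution is a bit more economical: it places a companion block of size $\geq 2$ \emph{first} (so that $b_{12}=1$), and then a single conjugation by $z=1+c(B)E_{21}$ already achieves $c(z^{-1}Bz)=0$, rather than composing several shifts inside an arbitrarily located block as you do.
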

\begin{proof}
Assume first that $A$ is non-scalar. It then follows from the rational
normal form that $A$ is similar to a matrix $B=(b_{ij})$ with $b_{12}=1$
and $b_{ij}=0$ for $j\geq i+2$, so we have $A=gBg^{-1}$ for some
$g\in\GL_{n}(K)$. Define $z\in\SL_{n}(K)$ as
\[
z=1+c(B)E_{21}.
\]
Then the $(i,j)$ entry of $z^{-1}Bz$ is $0$ for $j\geq i+2$ and
$c(z^{-1}Bz)=0$, so by (\ref{eq:Tr_c(A)}) we have $\Tr(P_{n}^{r}z^{-1}Bz)=0$
for $r=0,\dots,n-1$. By Proposition~\ref{prop:LF-criterion-fields}
it follows that $B=[zP_{n}z^{-1},Y]$ for some $Y\in\M_{n}(K)$, and
thus $A=[gzP_{n}(gz)^{-1},gYg^{-1}]$.

Assume on the other hand that $A$ is a scalar. Then $\Tr(J_{n}(0)^{r}A)=0$
for $r=0,\dots,n-1$, and Proposition~\ref{prop:LF-criterion-fields}
implies that $A=[J_{n}(0),Y]$, for some $Y\in\M_{n}(K)$.
\end{proof}

\section{\label{sec:Similarity}Matrix similarity over a PID}

In this section we extend the results of \cite[Section~2]{Laffey-Reams}
on similarity of matrices over $\Z$ to matrices over an arbitrary
PID $R$.
\begin{lem}
\label{lem:b12-avoidsprimes}Let $A\in\M_{n}(R)$ be non-scalar, and
let $S$ be a finite set of maximal ideals of $R$ such that $A_{\mfp}\in\M_{n}(R/\mfp)$
is non-scalar for every $\mfp\in S$. Then $A$ is similar to a matrix
$B=(b_{ij})\in\M_{n}(R)$ such that $b_{12}\notin\mfp$ for all $\mfp\in S$.\end{lem}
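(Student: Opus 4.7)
My plan is to reduce the lemma to the analogous statement over each residue field $R/\mfp$ for $\mfp\in S$, then assemble the resulting conjugators into one global conjugator using the Chinese remainder theorem and the surjectivity of $\SL_{n}(R)\to\SL_{n}(R/I)$ highlighted in the introduction.

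\textbf{Field-level claim.} The essential ingredient is: if $K$ is a field and $\bar{A}\in\M_{n}(K)$ is non-scalar, then there exists $\bar{g}\in\SL_{n}(K)$ such that $(\bar{g}\bar{A}\bar{g}^{-1})_{12}$ is a unit. Since $\bar{A}$ is non-scalar, its minimal polynomial has degree $d\geq2$, so $K^{n}$ admits a cyclic vector for it. Via the rational canonical form, $\bar{A}$ is $\GL_{n}(K)$-conjugate (by some $\bar{h}$) to a block-diagonal matrix whose first block is the $d\times d$ companion matrix of the minimal polynomial; in particular the $(1,2)$ entry of $\bar{h}\bar{A}\bar{h}^{-1}$ equals $1$. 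To upgrade $\bar{h}$ to an element of $\SL_{n}(K)$, set $\bar{g}=D\bar{h}$ with $D=\mathrm{diag}(c^{-1},1,\ldots,1)$ and $c=\det\bar{h}$. Then $\det\bar{g}=1$, and conjugation by $D$ multiplies each $(1,j)$ entry by $c^{-1}$ for $j\geq 2$, so the new $(1,2)$ entry is $c^{-1}\cdot 1=c^{-1}$, still a unit.

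\textbf{Assembling over $R$.} Applying the field-level claim to each $A_{\mfp}$ (which is non-scalar by hypothesis) yields $\bar{g}_{\mfp}\in\SL_{n}(R/\mfp)$ with $(\bar{g}_{\mfp}A_{\mfp}\bar{g}_{\mfp}^{-1})_{12}$ a unit in $R/\mfp$. Set $I=\prod_{\mfp\in S}\mfp$. By the Chinese remainder theorem, $R/I\cong\prod_{\mfp\in S}R/\mfp$, hence $\SL_{n}(R/I)\cong\prod_{\mfp\in S}\SL_{n}(R/\mfp)$, and the tuple $(\bar{g}_{\mfp})_{\mfp\in S}$ corresponds to some $\bar{g}\in\SL_{n}(R/I)$. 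Using the surjectivity of $\SL_{n}(R)\to\SL_{n}(R/I)$ for the PID $R$, I would pick a preimage $g\in\SL_{n}(R)$ and take $B=gAg^{-1}\in\M_{n}(R)$, which is similar to $A$. For every $\mfp\in S$,
\[
b_{12}\bmod\mfp = (\bar{g}_{\mfp}A_{\mfp}\bar{g}_{\mfp}^{-1})_{12}
\]
is a unit in $R/\mfp$, so $b_{12}\notin\mfp$ as required.

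\textbf{Main obstacle.} The only non-routine point is the field-level assertion, and within it the upgrade from $\GL_{n}(K)$ to $\SL_{n}(K)$. The rational form gives the $\GL_{n}$-conjugacy for free, but one has to choose a determinant correction that does not annihilate the newly created $(1,2)$ entry; scaling only in the first coordinate is what makes the trick work. Everything else is bookkeeping: CRT to combine the local data and a standard lifting property of $\SL_{n}$ over PIDs to globalise.
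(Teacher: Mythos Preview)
Your proof is correct and follows essentially the same route as the paper: for each $\mfp\in S$ use the rational canonical form over the field $R/\mfp$ to produce a conjugator with nonzero $(1,2)$ entry, adjust from $\GL_{n}$ to $\SL_{n}$ via a diagonal matrix, then use the Chinese remainder theorem and the surjectivity of $\SL_{n}(R)\to\SL_{n}(R/I)$ to lift to a single $g\in\SL_{n}(R)$. The only cosmetic difference is that the paper phrases the determinant correction as the decomposition $\GL_{n}(R/\mfp)=T(R/\mfp)\SL_{n}(R/\mfp)$, whereas you write down the explicit diagonal $D=\mathrm{diag}(c^{-1},1,\dots,1)$ --- the content is identical.
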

\begin{proof}
It is well known that for any PID $R$ and any non-zero ideal $\mfa$
of $R$ the natural map
\begin{equation}
\SL_{n}(R)\longrightarrow\SL_{n}(R/\mfa)\label{eq:strongapprox}
\end{equation}
is surjective. This follows for example from the fact that $R/\mfa$
is the product of local rings and that over local rings $\SL_{n}$
is generated by elementary matrices (see~\cite[2.2.2~and~2.2.6]{Rosenberg_K-theory}).
Moreover, if we take $\mfa=\prod_{\mfp\in S}\mfp$ the Chinese remainder
theorem implies that we have an isomorphism
\begin{equation}
\SL_{n}(R/\mfa)\longiso\prod_{\mfp\in S}\SL_{n}(R/\mfp).\label{eq:Chineserem}
\end{equation}
Let $\mfp\in S$. Since $A_{\mfp}$ is non-scalar and $R/\mfp$ is
a field the rational canonical form for matrices in $\M_{n}(R/\mfp)$
implies that there exists a $g_{\mfp}\in\GL_{n}(R/\mfp)$ such that
$g_{\mfp}A_{\mfp}g_{\mfp}^{-1}$ is a matrix whose $(1,2)$ entry
is non-zero. Since $\GL_{n}(R/\mfp)=T(R/\mfp)\SL_{n}(R/\mfp)$, where
$T(R/\mfp)$ is the diagonal subgroup of $\GL_{n}(R/\mfp)$, we may
take $g_{\mfp}$ to be in $\SL_{n}(R/\mfp)$. Suppose that $g_{\mfp}$
is chosen in this way for every $\mfp\in S$. By the surjectivity
of the maps (\ref{eq:strongapprox}) and (\ref{eq:Chineserem}), there
exists a $g\in\SL_{n}(R)$ such that the image of $g$ in $\SL_{n}(R/\mfp)$
is $g_{\mfp}$ for all $\mfp\in S$. Let $B=(b_{ij})=gAg^{-1}$. Then
$B$ is a matrix such that $b_{12}$ is non-zero modulo every $\mfp\in S$.
\end{proof}
The following lemma will be used repeatedly in the proof of Proposition~\ref{prop:3x3-normalform}
and Theorem~\ref{thm:LF-normalform}. It can informally be described
as saying that if the off-diagonal entries in a row (column) of a
matrix $A\in\M_{n}(R)$ with $n\geq3$ have a greatest common divisor
$d$, then $A$ is similar to a matrix in which the corresponding
row (column) has off-diagonal entries $d,0,\dots,0$.
\begin{lem}
\label{lem:row-column}Let $A=(a_{ij})\in\M_{n}(R)$, $n\geq3$. Let
$1\leq u\leq n$ and $1\leq v\leq n$ be fixed. Let $r\in R$ be a
generator of the ideal $(a_{uj}\mid1\leq j\leq n,\, u\neq j)$, and
let $c\in R$ be a generator of the ideal $(a_{iv}\mid1\leq i\leq n,\, i\neq v)$.
Then $A$ is similar to a matrix $B=(b_{ij})$ such that if $u=1$
we have $b_{u2}=r$ and $b_{uj}=0$ for all $3\leq j\leq n$, and
if $u\geq2$ we have $b_{u1}=r$ and $b_{uj}=0$ for all $1\leq j\leq n$
such that $j\notin\{1,u\}$. Moreover, $A$ is similar to a matrix
$C=(c_{ij})$ such that if $v=1$ we have $c_{2v}=r$ and $c_{iv}=0$
for all $3\leq i\leq n$, and if $v\geq2$ we have $c_{1v}=c$ and
$c_{iv}=0$ for all $1\leq i\leq n$ such that $i\notin\{1,v\}$.\end{lem}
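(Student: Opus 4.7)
The plan is to prove the row statement (about $B$) in detail and deduce the column statement (about $C$) by applying the row version to $A^{T}$: if $B=gAg^{-1}$ then $B^{T}=(g^{T})^{-1}A^{T}g^{T}$, so $A$ is similar to $B$ if and only if $A^{T}$ is similar to $B^{T}$, and the off-diagonal entries of row $v$ of $A^{T}$ are precisely the off-diagonal entries of column $v$ of $A$, sharing the GCD $c$.

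For the row statement, the key idea is to choose $P\in\GL_{n}(R)$ whose $u$-th row equals $e_{u}$; the identity $PP^{-1}=1_{n}$ then forces the $u$-th row of $P^{-1}$ also to equal $e_{u}$, so conjugation by $P$ acts on row $u$ of $A$ purely by right-multiplication by $P^{-1}$. Concretely, given any $M\in\GL_{n-1}(R)$, I would define $P^{-1}$ to have $(u,u)$ entry equal to $1$, zeros elsewhere in row $u$ and column $u$, and complementary $(n-1)\times(n-1)$ block equal to $M$. Then $P$ has the analogous block form with $M$ replaced by $M^{-1}$, so $P\in\GL_{n}(R)$. A short block computation shows that row $u$ of $B=PAP^{-1}$ has $a_{uu}$ in position $u$, and that the vector obtained by deleting the $u$-th entry is $wM$, where $w$ is the row vector of off-diagonal entries of row $u$ of $A$.

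It remains to exhibit $M\in\GL_{n-1}(R)$ such that $wM$ has a single nonzero entry $r$ in the required slot. Since $r$ generates the ideal $(w)$ and $R$ is a PID, the vector $w/r$ is unimodular, and the standard fact that a unimodular row over a PID extends to an invertible (indeed $\SL$) matrix yields $N\in\GL_{n-1}(R)$ whose first row is $w/r$. Setting $M=N^{-1}$ gives $wM=re_{1}$. Tracking indices: for $u=1$ the first slot of $w$ corresponds to column $2$ of $A$, matching $b_{12}=r$ and $b_{1j}=0$ for $j\geq 3$; for $u\geq 2$ it corresponds to column $1$, matching $b_{u1}=r$ and $b_{uj}=0$ for $j\notin\{1,u\}$, as required.

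The only non-routine input is the PID-theoretic fact that unimodular rows extend to invertible matrices, which is essentially the same surjectivity phenomenon already exploited in Lemma~\ref{lem:b12-avoidsprimes}; everything else is bookkeeping with block matrices and indices, so I do not anticipate a significant obstacle. The lemma thus fits naturally into the framework set up in the preceding results.
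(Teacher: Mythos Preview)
Your proof is correct and takes a genuinely different, more conceptual route than the paper's. The paper argues by an explicit iterative reduction: it conjugates by embedded $2\times 2$ blocks $M_{ij}$, at each step replacing a pair of off-diagonal entries in the row by their gcd and a zero, and then reduces $u\geq 2$ to $u=1$ via a permutation matrix; the column case is redone by the same hands-on computation. Your argument compresses all of this into one conjugation by a block matrix $P=1\oplus M^{-1}$ (with the $1$ in slot $u$), reducing the problem to finding $M\in\GL_{n-1}(R)$ with $wM=re_{1}$, which you then obtain from unimodular completion over a PID; the column case comes for free via transposition. The two approaches are closely related at bottom---the paper's iteration is essentially the standard algorithm that produces the completion---but yours is shorter and makes the underlying linear-algebraic reason transparent, while the paper's is more explicit and self-contained (it does not invoke unimodular completion as a black box).

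Two small remarks. First, you should mention the trivial case $r=0$ (equivalently $w=0$), since otherwise $w/r$ is undefined; this is handled by taking $B=A$. Second, the link you draw to Lemma~\ref{lem:b12-avoidsprimes} is a bit loose: that lemma uses surjectivity of $\SL_{n}(R)\to\SL_{n}(R/\mathfrak{a})$, whereas what you need is that a unimodular row over a PID can be completed to an invertible matrix. Both facts are standard consequences of $R$ being a PID, so this does not affect correctness, but they are not the same statement.
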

\begin{proof}
The proof follows the lines of \cite[Ch.~III, Section~2]{Newman}.
For $1\leq i<j\leq n$ and $\left(\begin{smallmatrix}x & y\\
z & w
\end{smallmatrix}\right)\in\SL_{2}(R)$, let
\begin{align*}
M_{ij} & =M_{ij}(x,y,z,w)\\
 & =1_{n}+(x-1)E_{ii}+yE_{ij}+zE_{ji}+(w-1)E_{jj}\in\SL_{n}(R).
\end{align*}
Note that $M_{ij}^{-1}=M_{ij}(w,-y,-z,x)$. Let $3\leq j\leq n$.
Direct computation shows that the first row in $B_{1}\coloneqq M_{2j}^{-1}AM_{2j}$
is
\begin{align*}
(a_{11},a_{12}x+a_{13}z,a_{12}y+a_{13}w,a_{14},\dots,a_{1n}) & \quad\text{if }j=3,\\
(a_{11},a_{12}x+a_{1j}z,a_{13},\dots,a_{1,j-1},a_{12}y+a_{1j}w,a_{1,j+1},\dots,a_{1n}) & \quad\text{if }j>3.
\end{align*}
Now let $3\leq j\leq n$ be the smallest integer such that $a_{1j}\neq0$
(if no such $j$ exists the assertion of the lemma holds trivially
for $A$ and $u=1$). Let $d\in R$ be a generator of $(a_{12},a_{1j})$
and set
\[
y=a_{1j}d^{-1},\quad w=-a_{12}d^{-1}.
\]
Then $(y,w)=(1)$ and hence $x,z\in R$ may be determined so that
$xw-yz=1$. Thus $a_{12}x+a_{1j}z=-d$. With these values of $x,y,z,w$
all the entries of $A_{1}$ in positions $(1,3),\dots,(1,j)$ are
zero, and the $(1,2)$ entry generates the ideal ($a_{12},a_{1j})$.
Repeating the process, let $j<k\leq n$ be the smallest integer such
that $a_{1k}\neq0$. Then $B_{2}\coloneqq M_{2k}^{-1}B_{1}M_{2k}$
has all its entries $(1,3),\dots,(1,k)$ zero and its $(1,2)$ entry
generates the ideal $(a_{12},a_{1j},a_{1k})$. Proceeding in this
way, we obtain a matrix $B=(b_{ij})$ similar to $A$ such that $b_{12}$
is a generator of $(a_{1j}\mid2\leq j\leq n)$ and $b_{1j}=0$ for
$3\leq j\leq n$ (the generator $b_{12}$ can be replaced by any other
generator of $(a_{1j}\mid2\leq j\leq n)$ by a diagonal similarity
transformation of $B$). This shows the existence of $B$ for $u=1$.
For $u\geq2$, observe that if we let $W_{u}=(w_{ij}^{(u)})\in\GL_{n}(R)$
be any permutation matrix such that $w_{1u}^{(u)}=w_{u1}=1$, then
\[
A'=(a_{ij}')=W_{u}AW_{u}^{-1}
\]
is a matrix such that $a_{11}'=a_{uu}$ and $\{a_{1j}'\mid2\leq j\leq n\}=\{a_{uj}\mid1\leq j\leq n,u\neq j\}$.
Informally, the off-diagonal entries in the $u$-th row of $A$ are
the same as the off-diagonal entries in the first row of $A'$, up
to a permutation. Thus the existence of $B$ for $u\geq2$ follows
from the argument for $u=1$ above.

For the existence of $C$ for $v=1$, let $3\leq i\leq n$ and $C_{1}\coloneqq M_{2i}^{-1}AM_{2i}$.
Direct computation shows that the first column in $C_{1}$ is
\begin{align*}
(a_{11},a_{21}x+a_{31}y,a_{21}z+a_{31}w,a_{41},\dots,a_{n1})^{T} & \quad\text{if }i=3,\\
(a_{11},a_{21}x+a_{i1}y,a_{31},\dots,a_{i-1,1},a_{21}z+a_{i1}w,a_{i+1,1},\dots,a_{n1})^{T} & \quad\text{if }i>3.
\end{align*}
Now let $3\leq i\leq n$ be the smallest integer such that $a_{i1}\neq0$
(if no such $i$ exists the assertion of the lemma holds trivially
for $A$ and $v=1$). Let $e\in R$ be a generator of $(a_{21},a_{i1})$
and set
\[
z=a_{i1}e^{-1},\quad w=-a_{21}d^{-1}.
\]
Then $(z,w)=(1)$ and hence $x,y\in R$ may be determined so that
$xw-yz=1$. Thus $a_{21}x+a_{i1}y=-e$. With these values of $x,y,z,w$
all the entries of $C_{1}$ in positions $(3,1),\dots,(i,1)$ are
zero, and the $(2,1)$ entry generates the ideal ($a_{21},a_{i1})$.
Repeating the process in analogy with the above argument, we obtain
a matrix $C$ satisfying the assertion of the lemma for $v=1$. For
$v\geq2$ we may use the matrix $W_{v}$ as above to reduce to the
case where $v=1$.\end{proof}
\begin{prop}
\label{prop:3x3-normalform}Let $A\in\M_{3}(R)$ be non-scalar. Then
$A$ is similar to a matrix $B=(b_{ij})\in\M_{3}(R)$ such that $b_{12}\mid b_{ij}$
for all $i\neq j$ and $b_{12}\mid(b_{ii}-b_{jj})$ for all $1\leq i,j\leq3$.\end{prop}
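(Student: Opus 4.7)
The divisibility conditions in the conclusion are equivalent to saying that $b_{12}$ generates the ideal $I(A)\subseteq R$ generated by all off-diagonal entries of $A$ together with all pairwise differences of its diagonal entries. Since $I(A)$ is invariant under similarity (if $gAg^{-1}=B$ then $I(A)=I(B)$) and $A$ is non-scalar, $I(A)\neq 0$; write $I(A)=(d)$. As any off-diagonal entry of any matrix similar to $A$ already lies in $(d)$, the task is equivalent to finding $B\sim A$ with $b_{12}$ an associate of $d$, i.e.\ with $v_{\mfp}(b_{12})=v_{\mfp}(d)$ at every maximal ideal $\mfp$ of $R$.

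My plan follows the scheme advertised in the introduction: apply Lemma~\ref{lem:b12-avoidsprimes} to dodge finitely many problematic primes (in particular those of residue field $\F_{2}$), then carry out a Laffey--Reams-style reduction via Lemma~\ref{lem:row-column}. Let $S$ be a finite set of maximal ideals $\mfp$ of $R$ at which $A_{\mfp}$ is non-scalar, chosen to contain all index-$2$ primes that arise in the subsequent reduction (together with any other finitely many primes demanded below). By Lemma~\ref{lem:b12-avoidsprimes} we may assume $a_{12}\notin\mfp$ for every $\mfp\in S$. Applying Lemma~\ref{lem:row-column} with $u=1$ we conjugate to a similar matrix with $b_{13}=0$ and $b_{12}=\gcd(a_{12},a_{13})$; since this new $b_{12}$ divides $a_{12}$, it still lies outside every $\mfp\in S$. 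The remaining task is to reduce $b_{12}$ further so that it becomes an associate of $d$. I would do this by a finite sequence of additional similarity transformations---further applications of Lemma~\ref{lem:row-column} at other positions $(u,v)$, together with conjugations by elementary matrices $1+sE_{ij}$. A direct computation shows, for instance, that conjugation by $1+sE_{12}$ changes $b_{12}$ to $b_{12}+s(b_{11}-b_{22})-s^{2}b_{21}$, which gives enough flexibility to absorb any prime factor $\mfq$ of $b_{12}$ with $\mfq\nmid d$ by suitable choice of $s$ (using that, modulo such a $\mfq$, the matrix is non-scalar so at least one of $b_{11}-b_{22}$ or $b_{21}$ is a unit). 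The conjugating matrices can moreover be arranged to be congruent to $1_{3}$ modulo $\prod_{\mfp\in S}\mfp$, so Step~1's coprimality with primes in $S$ is preserved throughout.

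The main obstacle is to verify that this process terminates with $b_{12}$ exactly an associate of $d$, rather than stalling at some strict divisor of $d$. This is precisely where Laffey and Reams invoke the primality of $2$ in their $\Z$-specific argument: over an $\F_{2}$-residue field the quadratic $s\mapsto s(b_{11}-b_{22})-s^{2}b_{21}$ (and its analogues for other elementary conjugations) has too few nonzero values to permit the necessary cancellation, forcing a case split that is special to $\Z$. In the present setup the reductions at index-$2$ primes are already trivial---by Step~1, $b_{12}$ is a unit modulo every $\mfp\in S$, so no further adjustment there is needed---and the Laffey--Reams reasoning applies uniformly at all remaining primes. Carrying out this bookkeeping for the $3\times 3$ case, noting that each adjustment reduces the set of primes of $b_{12}$ outside the support of $d$, yields the matrix $B$ required by the proposition.
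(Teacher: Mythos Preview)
Your proposal has a genuine gap at its central step. You assert that for a prime $\mfq$ with $\mfq\mid b_{12}$ and $\mfq\nmid d$, conjugation by $1+sE_{12}$ can remove $\mfq$ from $b_{12}$ because ``modulo such a $\mfq$, the matrix is non-scalar so at least one of $b_{11}-b_{22}$ or $b_{21}$ is a unit''. This inference is false: non-scalarity of $B_{\mfq}$ only guarantees that \emph{some} off-diagonal entry or diagonal difference is a unit mod $\mfq$, and the witness could equally well be $b_{23}$, $b_{31}$, $b_{32}$ or $b_{33}-b_{11}$. Of the six elementary conjugations $1+sE_{ij}$, only $1+sE_{12}$ and $1+sE_{13}$ alter $b_{12}$ at all (the latter giving $b_{12}+sb_{32}$, since $b_{13}=0$), and neither gives direct access to $b_{23}$, $b_{31}$ or $b_{33}-b_{11}$. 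So the ``enough flexibility'' you invoke is not there, and your termination claim (``each adjustment reduces the set of primes of $b_{12}$ outside the support of $d$'') is unsupported.

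What the paper does instead is introduce a \emph{minimality} hypothesis you are missing: among all matrices similar to $A$ with $(1,2)$ entry avoiding the index-$2$ primes, it fixes one whose $b_{12}$ has the fewest prime factors. Minimality together with Lemma~\ref{lem:row-column} then forces $b_{12}$ to divide $y(b_{33}^{(1)}-b_{11}^{(1)}-yb_{31}^{(1)})$ for \emph{every} $y\in R$, and the point of avoiding index-$2$ primes is precisely that $|R/\mfp|\geq 3$ for every $\mfp\mid b_{12}$, so one can choose $\lambda,\lambda'$ with $\lambda,\lambda',\lambda-\lambda'$ all units in $R/(b_{12})$ and thereby separate the two coefficients, concluding $b_{12}\mid b_{31}^{(1)}$ and $b_{12}\mid(b_{33}^{(1)}-b_{11}^{(1)})$ individually. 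This is rather different from your diagnosis (``the quadratic has too few nonzero values over $\F_{2}$''); it is a linear-independence-of-evaluations issue, not a surjectivity issue. The paper then iterates this through two further explicit conjugations (its Claims~I and~II) to reach all remaining entries. Your sketch contains neither the minimality device nor any substitute for it, so as written it does not establish the proposition.
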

\begin{proof}
Write $A=aI+bA'$, where $a,b\in R$, $b\neq0$ and where, if $A'=(a_{ij}')$,
we have $(a_{ii}'-a_{jj}',a_{ij}'\mid i\neq j,1\leq i,j\leq3)=(1)$.
Note that $A_{\mfp}'$ is non-scalar for every maximal ideal $\mfp$
of $R$ and that the proposition will follow for $A$ if we can show
it for $A'$, that is, if we can show that $A'$ is similar to a matrix
whose $(1,2)$ entry is a unit. Without loss of generality we may
therefore assume that $A=A'$ so that $A$ satisfies
\[
(a_{ii}-a_{jj},a_{ij}\mid i\neq j,1\leq i,j\leq3)=(1).
\]
Note that any matrix similar to $A$ will also satisfy this. Let
\[
S\coloneqq\{\mfp\in\Specm R\mid|R/\mfp|=2\}.
\]
Note that $S$ is a finite set since in any PID (or any Dedekind domain)
there are only finitely many maximal ideals of any given finite index.
Since $A_{\mfp}$ is not scalar for any maximal ideal $\mfp$ of $R$,
Lemma~\ref{lem:b12-avoidsprimes} implies that $A$ is similar to
a matrix $B=(b_{ij})$ such that $b_{12}\notin\mfp$ for all $\mfp\in S$.
Among all such matrices choose one for which the number of distinct
primes which divide $b_{12}$ is least possible, and subject to this,
for which the number of not necessarily distinct prime factors is
minimal. By Lemma~\ref{lem:row-column} applied to the first row
in $B$, we see that there exists a matrix $B'$ similar to $B$ whose
$(1,3)$ entry is zero and whose $(1,2)$ entry, being equal to a
generator of $(b_{12},b_{13})$, has no more distinct prime factors
than $b_{12}$. Hence we may assume that $B$ has been replaced by
$B'$ so that $b_{13}=0$. We thus have the following condition on
$B$:

\MyQuote{The matrix $B=(b_{ij})$ is similar to $A$, $b_{12}\notin\mfp$ for all $\mfp\in S$, $b_{13}=0$, the entry $b_{12}$ has the smallest number of distinct prime factors among all the matrices similar to $A$ and among all matrices with these properties $B$ is such that $b_{12}$ has the minimal number of not necessarily distinct prime factors.}Note
first that by Lemma~\ref{lem:row-column} applied to the second column
in $B$, there exists a matrix similar to $B$ whose $(1,2)$ entry
is a generator of $(b_{12},b_{32})$. Thus, by $(*)$ we must have
$b_{12}\mid b_{32}$, so $b_{32}=b_{12}a$ for some $a\in R$. Let
\[
B_{1}=(b_{ij}^{(1)})=(1-E_{31}a)B(1-E_{31}a)^{-1}.
\]
Then $b_{12}^{(1)}=b_{12}$ and $b{}_{13}^{(1)}=b_{32}^{(1)}=0$ so
that
\[
B_{1}=\begin{pmatrix}b_{11}^{(1)} & b_{12} & 0\\
b_{21}^{(1)} & b_{22}^{(1)} & b_{23}^{(1)}\\
b_{31}^{(1)} & 0 & b_{33}^{(1)}
\end{pmatrix}.
\]
In particular, $B'$ satisfies $(*)$.
\begin{claim}
\label{Claim I}The entry $b_{12}$ divides both $b_{33}^{(1)}-b_{11}^{(1)}$
and $b_{31}^{(1)}$.
\end{claim}
Let $y\in R$. The first row of the matrix $(1+E_{13}y)B_{1}(1+E_{13}y)^{-1}$
is
\[
(b_{11}^{(1)}+yb_{31}^{(1)},\, b_{12},\, y(b_{33}^{(1)}-b_{11}^{(1)}-yb_{31}^{(1)})).
\]
Thus, by $(*)$ and Lemma~\ref{lem:row-column} applied to the first
row in $(1+E_{13}y)B_{1}(1+E_{13}y)^{-1}$ we conclude that $b_{12}$
divides $y(b_{33}^{(1)}-b_{11}^{(1)}-yb_{31}^{(1)})$ for any $y\in R$.
Let
\[
(b_{12})=\mfp_{1}^{e_{1}}\cdots\mfp_{\nu}^{e_{\nu}}
\]
be the factorisation of $(b_{12})$, where $\nu\in\N$, $e_{i}\in\N$
and the ideals $\mfp_{i}\in\Specm R$ are distinct for $1\leq i\leq\nu$.
By $(*)$ and the definition of $S$ we know that $|R/\mfp_{i}|\geq3$
for any $1\leq i\leq\nu$. Hence there exist elements $y_{i},y_{i}'\in R/\mfp_{i}$
such that
\begin{equation}
y_{i}\neq0,\quad y_{i}'\neq0,\quad y_{i}\neq y_{i}',\quad\text{for }i=1,\dots,\nu.\label{eq:yi-yi'}
\end{equation}
By the Chinese remainder theorem we have
\[
R/(b_{12})\cong\prod_{i=1}^{\nu}R/\mfp_{i}^{e_{i}}.
\]
Let $\lambda=(y_{1},\dots,y_{\nu}),\lambda'=(y_{1}',\dots,y_{\nu}')\in\prod_{i=1}^{\nu}R/\mfp_{i}^{e_{i}}$.
Then $\lambda$ and $\lambda'$ can be considered as elements in $R/(b_{12})$
and because of (\ref{eq:yi-yi'}) each of $\lambda,\lambda'$ and
$\lambda-\lambda'$ is a unit in $R/(b_{12})$. In particular, each
of $\lambda,\lambda'$ and $\lambda-\lambda'$ is coprime to $b_{12}$.
We know from the above that $b_{12}$ divides $y(b_{33}^{(1)}-b_{11}^{(1)}-yb_{31}^{(1)})$
for any $y\in R$. In particular, choosing $y=\lambda,\lambda',\lambda-\lambda'$,
respectively, we obtain $b_{31}^{(1)}(\lambda-\lambda')\in(b_{12})$,
hence $b_{31}^{(1)}\in(b_{12})$ and $b_{33}^{(1)}-b_{11}^{(1)}\in(b_{12})$.
This proves the claim.

By Claim~\ref{Claim I} there exist elements $\alpha,\beta\in R$
such that
\[
b_{33}^{(1)}-b_{11}^{(1)}=\alpha b_{12}\quad\text{and}\quad b_{31}^{(1)}=\beta b_{12}.
\]
Let
\[
B_{2}=(b_{ij}^{(2)})=(1+E_{21}(-\alpha+\beta))(1+E_{31})B_{1}(1+E_{31})^{-1}(1+E_{21}(-\alpha+\beta))^{-1}.
\]
Then $b_{12}^{(2)}=b_{32}^{(2)}=b_{12}$ and $b_{13}^{(2)}=b_{31}^{(2)}=0$
so that
\[
B_{2}=\begin{pmatrix}b_{11}^{(2)} & b_{12} & 0\\
b_{21}^{(2)} & b_{22}^{(2)} & b_{23}^{(2)}\\
0 & b_{12} & b_{33}^{(2)}
\end{pmatrix}.
\]
Moreover, let
\[
B'_{2}=(1-E_{31})B_{2}(1-E_{31})^{-1}=\begin{pmatrix}b_{11}^{(2)} & b_{12} & 0\\
b_{23}^{(2)}+b_{21}^{(2)} & b_{22}^{(2)} & b_{23}^{(2)}\\
b_{33}^{(2)}-b_{11}^{(2)} & 0 & b_{33}^{(2)}
\end{pmatrix}
\]
and
\[
B''_{2}=(1-E_{33})B_{2}(1-E_{33})^{-1}=\begin{pmatrix}b_{33}^{(2)} & b_{12} & 0\\
b_{23}^{(2)}+b_{21}^{(2)} & b_{22}^{(2)} & b_{21}^{(2)}\\
b_{11}^{(2)}-b_{33}^{(2)} & 0 & b_{11}^{(2)}
\end{pmatrix}.
\]
We will now show that $B_{2}$ has the property that $b_{12}\mid b_{ij}^{(2)}$
for all $i\neq j$ and $b_{12}\mid(b_{ii}^{(2)}-b_{jj}^{(2)})$ for
all $1\leq i,j\leq3$. This follows from the following fact applied
to the matrices $B'_{2}$ and $B''_{2}$.
\begin{claim}
\label{Claim II}Suppose that $C=(c_{ij})\in\M_{n}(R)$ satisfies
$(*)$ and that $c_{32}=0$. Then $c_{12}\mid c_{ij}$ for all $i,j$
such that $(i,j)\neq(2,1)$ and $i\neq j$, and $c_{12}\mid(c_{ii}-c_{jj})$
for all $1\leq i,j\leq3$.
\end{claim}
To prove the claim, let $x\in R$ and
\[
X=(x_{ij})=(1+E_{32}x)C(1+E_{32}x)^{-1}.
\]
Then
\begin{align*}
x_{12} & =c_{12},\\
x_{32} & =x(c_{22}-c_{33}-xc_{23}),
\end{align*}
and by Lemma~\ref{lem:row-column} applied to the second column in
$X$ we conclude that $c_{12}$ divides $x(c_{22}-c_{33}-xc_{23})$
for any $x\in R$. By $(*)$ and the same argument as in the proof
of Claim~\ref{Claim I} we obtain
\[
c_{12}\mid(c_{22}-c_{33})\quad\text{and}\quad c_{12}\mid c_{23}.
\]
Next, for $y\in R$ let
\[
Y=(y_{ij})=(1+E_{13}y)C(1+E_{13}y)^{-1}.
\]
Then
\begin{align*}
y_{12} & =c_{12},\\
y_{13} & =y(c_{33}-c_{11}-yc_{31}),
\end{align*}
and by Lemma~\ref{lem:row-column} applied to the first row in $Y$
and the same argument as for the matrix $X$ (that is, using $(*)$
and the same argument as in the proof of Claim~\ref{Claim I}) we
obtain
\[
c_{12}\mid(c_{33}-c_{11})\quad\text{and}\quad c_{12}\mid c_{31},
\]
whence also $c_{12}\mid(c{}_{22}-c_{11})$. This proves Claim~\ref{Claim II}
for $C$.

Applying Claim~\ref{Claim II} to the matrices $B'_{2}$ and $B''_{2}$,
respectively, we conclude that $B_{2}$ has the property that $b_{12}\mid b_{ij}^{(2)}$
for all $i\neq j$ and $b_{12}\mid(b_{ii}^{(2)}-b_{jj}^{(2)})$ for
all $1\leq i,j\leq3$. Since $B_{2}$ is similar to $B$ (and $B$
is similar to $A$), we have
\[
(b_{ii}-b_{jj},b_{ij}\mid i\neq j,1\leq i,j\leq3)=(1),
\]
so $b_{12}$ must be a unit. This proves the proposition.
\end{proof}
We now use Proposition~\ref{prop:3x3-normalform} to prove the corresponding
result for matrices in $\M_{n}(R)$ for all $n\geq3$. More precisely,
we have
\begin{thm}
\label{thm:LF-normalform}Let $A\in\M_{n}(R)$ with $n\geq3$, be
non-scalar. Then $A$ is similar to a matrix $B=(b_{ij})\in\M_{n}(R)$
such that $b_{12}\mid b_{ij}$ for all $i\neq j$ and $b_{12}\mid(b_{ii}-b_{jj})$
for all $1\leq i,j\leq n$. Moreover, $B$ may be chosen with $b_{ij}=0$
for all $i,j$ such that $j\geq i+2$ and $1\leq i\leq n-2$.\end{thm}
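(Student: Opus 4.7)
The plan is to prove this by induction on $n\geq 3$. The base case $n=3$ is Proposition~\ref{prop:3x3-normalform}; the matrix $B_{2}$ constructed in its proof already has $b_{13}^{(2)}=0$, which is the Hessenberg condition for $n=3$.

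For the inductive step with $n\geq 4$, I would follow the setup of the 3x3 proof. Write $A=aI+bA'$ and replace $A$ by $A'$ to reduce to the case where the ideal generated by the off-diagonal entries and the diagonal differences of $A$ is all of $R$; then $A_{\mfp}$ is non-scalar for every $\mfp\in\Specm R$. With $S$ the finite set of maximal ideals of index $2$, Lemma~\ref{lem:b12-avoidsprimes} gives $B=(b_{ij})$ similar to $A$ with $b_{12}\notin\mfp$ for all $\mfp\in S$. Among all such matrices similar to $A$, choose $B$ for which $b_{12}$ has the minimum number of distinct prime factors, and subject to that, the minimum total number of prime factors (counted with multiplicity). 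Applying Lemma~\ref{lem:row-column} with $u=1$ and invoking the minimality of $b_{12}$, we may assume $b_{1j}=0$ for all $j\geq 3$.

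The bulk of the work is to establish jointly the divisibility conditions $b_{12}\mid b_{ij}$ for $i\neq j$ and $b_{12}\mid(b_{ii}-b_{jj})$ for all $i,j$, together with the remaining Hessenberg zeros $b_{ij}=0$ for $j\geq i+2$ and $2\leq i\leq n-2$. I propose a secondary induction on the number $k$ of rows already in Hessenberg form, generalising Claims~I and~II of the 3x3 proof. Assuming rows $1,\dots,k$ are in Hessenberg form, for each off-diagonal entry in row $k+1$ or each diagonal difference involving rows $\leq k+1$ not yet known to lie in $(b_{12})$, one chooses a targeted elementary conjugation $(1+yE_{uv})B(1+yE_{uv})^{-1}$ that preserves the Hessenberg zeros in rows $1,\dots,k$, and applies Lemma~\ref{lem:row-column} to a suitable row or column of the conjugate. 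The minimality of $b_{12}$ combined with the existence—via the Chinese Remainder Theorem and the exclusion of primes of index $2$—of elements $\lambda,\lambda',\lambda-\lambda'$ that are units modulo $b_{12}$, as in the proof of Claim~I, forces the target into $(b_{12})$. A further sequence of elementary similarities, each preserving both the divisibility condition and the Hessenberg zeros already achieved, then clears the Hessenberg-violating entries in row $k+1$.

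The main obstacle is the combinatorial bookkeeping in the secondary induction: at each step $k\to k+1$ one must restrict to similarities fixing the Hessenberg structure in rows $1,\dots,k$ (essentially a subgroup of $\GL_{n}(R)$ generated by the elementary matrices that leave the relevant zero pattern intact) while still retaining enough flexibility both to force the new divisibilities via Lemma~\ref{lem:row-column} and to produce the next row of Hessenberg zeros. This generalises the explicit choices in Claims~I and~II of the 3x3 case, where the two auxiliary matrices $B'_{2}$ and $B''_{2}$ together with two applications of Claim~II dispose of all the required divisibilities at once.
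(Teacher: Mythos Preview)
Your outline diverges from the paper's proof in two important ways, and the second of these is where your plan carries a real gap.

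First, the paper does not argue by induction on $n$. Instead, for general $n\geq 3$ it uses the $3\times 3$ case directly as a black box applied to $3\times 3$ sub-blocks. After the same initial setup you describe (reduce to $A$ non-scalar mod every $\mfp$, choose $B$ with $b_{12}$ minimal and $b_{1j}=0$ for $j\geq 3$), the paper observes that any similarity $B_{0}\mapsto gB_{0}g^{-1}$ of the top-left $3\times 3$ block $B_{0}$ extends to a similarity of $B$ via $g\oplus 1_{n-3}$. So the minimality of $b_{12}$ is inherited by $B_{0}$, and the entire argument of Proposition~\ref{prop:3x3-normalform} (Claims~I and~II) forces $b_{12}\mid(b_{11}-b_{22}),\,b_{12}\mid(b_{11}-b_{33})$ and $b_{12}\mid b_{ij}$ for $i\neq j$, $1\leq i,j\leq 3$. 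For indices $v\geq 4$, one first conjugates by a permutation matrix $W_{v}$ swapping $3\leftrightarrow v$ (and fixing $1,2$), which moves the relevant entry into the $3\times 3$ block without disturbing $b_{12}$, and then reapplies the $3\times 3$ argument. The remaining off-diagonal divisibilities $b_{12}\mid b_{vu}$ for $u\geq 3$ are obtained by a single conjugation $1+E_{u2}$ followed by Lemma~\ref{lem:row-column} on the second column. No secondary induction is needed; the $3\times 3$ case does all the work.

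Second---and this is the point you are missing---the paper separates the divisibility conditions from the Hessenberg zeros entirely. Once $b_{12}$ divides every off-diagonal entry and every diagonal difference, one has $B\equiv b_{11}1_{n}\pmod{b_{12}}$, and this congruence is preserved by \emph{any} similarity. So the Hessenberg zeros $c_{ij}=0$ for $j\geq i+2$ can be produced afterwards by completely unconstrained conjugations $1_{k}\oplus M$ (as in Lemma~\ref{lem:row-column}), and the resulting matrix automatically still satisfies all the divisibility conditions. Your proposal to maintain the Hessenberg structure of rows $1,\dots,k$ throughout the divisibility argument is exactly the ``combinatorial bookkeeping'' you flag as the main obstacle, and it is unnecessary: decouple the two tasks and the obstacle disappears. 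As written, your inductive step is only a plan, and your outer induction on $n$ never actually invokes the hypothesis for $n-1$.
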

\begin{proof}
As in the proof of Proposition~\ref{prop:3x3-normalform}, we may
assume that
\[
(a_{ii}-a_{jj},a_{ij}\mid i\neq j,1\leq i,j\leq n)=(1),
\]
and choose a matrix $B$ satisfying the following condition

\MyQuote{The matrix $B=(b_{ij})$ is similar to $A$, $(b_{12},2)=(1)$, $b_{1j}=0$ for $j\geq 3$, the entry $b_{12}$ has the smallest number of distinct prime factors among all the matrices similar to $A$ and among all matrices with these properties $B$ is such that $b_{12}$ has the minimal number of not necessarily distinct prime factors.}If
for some $i,j$ the entry $b_{12}$ does not divide $b_{ii}-b_{jj}$,
then $b_{12}$ does not divide $b_{11}-b_{vv}$ for some $v$. If
$v\geq4$ let $W_{v}=(w_{ij}^{(v)})\in\GL_{n}(R)$ be any permutation
matrix such that $w_{11}^{(v)}=w_{22}^{(v)}=1$, $w_{v3}^{(v)}=1$
and $w_{3v}^{(v)}=1$. Then $W_{v}BW_{v}^{-1}$ has $(1,2)$ entry
equal to $b_{12}$ and $(3,3)$ entry equal to $b_{vv}$, so we may
assume that $b_{12}$ does not divide $b_{11}-b_{22}$ or $b_{11}-b_{33}$.
Consider the submatrix
\[
B_{0}=(b_{ij})_{1\leq i,j\leq3}
\]
of $B$ and note that any similarity $B_{0}\mapsto g^{-1}B_{0}g$
for $g\in\GL_{3}(R)$ may be achieved by $B\mapsto(g\oplus I_{n-3})B(g\oplus I_{n-3})^{-1}$.
By the minimality property of $b_{12}$ expressed in $(*)$ and the
argument in the proof of Proposition~\ref{prop:3x3-normalform} applied
to $B_{0}$ we conclude that $b_{12}$ divides both $b_{11}-b_{22}$
and $b_{11}-b_{33}$, which is a contradiction. Thus
\[
b_{12}\mid(b_{ii}-b_{jj})\text{ for all }1\leq i,j\leq n\quad\text{and}\quad b_{12}\mid b_{ij}\text{ for all }i\neq j,\,1\leq i,j\leq3.
\]
Similarly, for any $4\leq v\leq n$ the matrix $W_{v}BW_{v}^{-1}$
has $(3,1)$ entry equal to $b_{v1}$, so by $(*)$ and the argument
in the proof of Proposition~\ref{prop:3x3-normalform} applied to
$B_{0}$ we conclude that $b_{12}\mid b_{v1}$. Hence
\[
b_{12}\mid b_{v1}\text{ for all }4\leq v\leq n.
\]
Furthermore, by $(*)$ and Lemma~\ref{lem:row-column} applied to
the second column in $B$, we see that
\[
b_{12}\mid b_{i2}\text{ for all }i\neq2.
\]
Let $1\leq u,v\leq n$ be such that $u\geq3$ and $v\neq u$. For
$x\in R$ let
\[
X_{u}=(x_{ij}^{(u)})=(1+E_{u2})B(1+E_{u2})^{-1},
\]
so that $x_{v2}^{(u)}=b_{v2}-b_{vu}$ and in particular $x_{12}^{(u)}=b_{12}$.
By $(*)$ and Lemma~\ref{lem:row-column} applied to the second column
in $X_{u}$ we see that $b_{12}\mid x_{v2}^{(u)}$ and since $b_{12}\mid b_{v2}$
we conclude that $b_{12}\mid b_{vu}$. Hence
\[
b_{12}\mid b_{vu}\text{ for all }u\geq3,\, v\neq u.
\]
 We have thus shown that $B$ has the property that $b_{12}\mid b_{ij}$
for all $i\neq j$ and $b_{12}\mid(b_{ii}-b_{jj})$ for all $1\leq i,j\leq n$.

For the second statement we follow \cite[III,~2]{Newman}. Conjugating
$B$ by $1_{2}\oplus M_{3j}\in\GL_{n}(R)$ for a suitable $M_{3j}\in\GL_{n-2}(R)$
(cf.~the proof of Lemma~\ref{lem:row-column}), we can replace $B$
by a matrix $B_{1}$ in which the first row equals that of $B$ and
whose $(2,j)$ entries are zero whenever $j\geq4$. Conjugating $B_{1}$
by $1_{3}\oplus M_{4j}\in\GL_{n}(R)$ for a suitable $M_{4j}\in\GL_{n-3}(R)$,
we can replace $B_{1}$ by a matrix $B_{2}$ in which the first two
rows equal those of $B_{1}$ and whose $(3,j)$ entries are zero whenever
$j\geq5$. Proceeding inductively in this way, we obtain a matrix
$C=(c_{ij})$ similar to $B$ such that $c_{12}=b_{12}$ and $c_{ij}=0$
for $i,j$ such that $j\geq i+2$ and $1\leq i\leq n-2$. But since
$B\equiv b_{11}1_{n}\bmod{(b_{12})}$ we also have $C\equiv b_{11}1_{n}\bmod{(b_{12})}$,
so $C$ has the desired form.
\end{proof}
Using Theorem~\ref{thm:LF-normalform} it is now easy to prove the
following result. The following proof is entirely analogous to that
of Laffey and Reams for $R=\Z$.
\begin{prop}
Let $A\in\M_{n}(R)$, $n\geq3$ have trace zero, and suppose that
for every $\mfp\in\Specm R$ and every $a\in R/\mfp$, $a\neq0$ we
have $A_{\mfp}\neq a1_{n}$. Then $A$ is similar to a matrix $B=(b_{ij})\in\M_{n}(R)$
where $b_{ii}=0$ for all $1\leq i\leq n$.\end{prop}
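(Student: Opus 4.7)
Following the strategy of Laffey and Reams for $R = \Z$ (see \cite[Section~2]{Laffey-Reams}), I would adapt the proof to a general PID using the ``avoid finitely many primes'' technique already established in Lemma~\ref{lem:b12-avoidsprimes} and the proofs of Proposition~\ref{prop:3x3-normalform} and Theorem~\ref{thm:LF-normalform}. The argument has three main phases.

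\medskip

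First, I would apply (the proof of) Theorem~\ref{thm:LF-normalform} to $A$, but with the exclusion set $S$ of Proposition~\ref{prop:3x3-normalform} enlarged to include every maximal ideal $\mfp$ dividing $n$. The hypothesis on $A$ guarantees that $A_\mfp$ is non-scalar at every such prime (this is the point at which the condition of the proposition enters), so Lemma~\ref{lem:b12-avoidsprimes} still applies and the argument of Theorem~\ref{thm:LF-normalform} goes through verbatim. This produces $B = (b_{ij}) \sim A$ with $b_{12}$ coprime to $n$ (and to $2$), satisfying $b_{12} \mid b_{ij}$ for all $i \neq j$, $b_{12} \mid (b_{ii} - b_{jj})$ for all $i, j$, and the Hessenberg-type vanishing $b_{ij} = 0$ whenever $j \geq i+2$ and $1 \leq i \leq n-2$.

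\medskip

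Second, summing the congruences $b_{ii} \equiv b_{11} \pmod{b_{12}}$ and using $\Tr(B) = 0$ gives $n b_{11} \equiv 0 \pmod{b_{12}}$. Combined with $\gcd(b_{12}, n) = 1$ from the previous step, this yields $b_{12} \mid b_{11}$, and hence $b_{12} \mid b_{ii}$ for every $i$. Writing $b_{ii} = b_{12} \alpha_i$ with $\alpha_i \in R$ and $\sum_i \alpha_i = 0$, I have reduced the problem to clearing a diagonal all of whose entries are multiples of the $(1,2)$-entry, in a matrix whose off-diagonal entries are also all multiples of $b_{12}$.

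\medskip

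Third, I would conjugate $B$ by the elementary matrix $1 + \alpha_1 E_{21}$, which zeros the $(1,1)$-entry and transfers its value to the $(2,2)$-entry; by the Hessenberg vanishing $b_{1j} = 0$ for $j \geq 3$, this conjugation leaves the superdiagonal entries $b_{23}, b_{34}, \ldots$ unchanged. Iterating this procedure sequentially clears $b_{11}, b_{22}, \ldots, b_{n-1,n-1}$, and the trace-zero condition (preserved throughout) forces $b_{nn} = 0$ automatically at the end. The main obstacle is to ensure that each intermediate elementary move is solvable in $R$: the accumulated value at $(k,k)$ must be divisible by the current superdiagonal entry $b_{k, k+1}$, which a priori is only known to be a multiple of $b_{12}$. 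This is handled by re-invoking (the proof of) Theorem~\ref{thm:LF-normalform} on the trailing $(n-k) \times (n-k)$ block between successive moves, refining its normal form so that the new top-left super-diagonal entry divides the updated accumulated value. This bookkeeping constitutes the technical heart of the argument and mirrors directly the analysis of Laffey and Reams in \cite[Section~2]{Laffey-Reams}.
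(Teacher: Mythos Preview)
Your proposal diverges from the paper's argument and contains a genuine gap, together with a missed simplification that the paper exploits.

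\medskip

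\textbf{The gap in Phase~1.} The hypothesis only rules out $A_\mfp = a1_n$ for $a \neq 0$; it permits $A_\mfp = 0$. So your claim that ``$A_\mfp$ is non-scalar at every prime dividing $n$'' is false in general, and Lemma~\ref{lem:b12-avoidsprimes} does not apply at such primes. The paper handles this by first writing $A = mA'$ where $m$ absorbs all primes at which $A$ reduces to zero, so that $A'_\mfp$ is genuinely non-scalar (including nonzero) for \emph{every} maximal ideal $\mfp$.

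\medskip

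\textbf{The missed simplification.} Once one has $A'$ non-scalar modulo every $\mfp$, Theorem~\ref{thm:LF-normalform} yields a similar matrix $A'' = (a_{ij}'')$ with $a_{12}'' \mid a_{ij}''$ for $i\neq j$ and $a_{12}'' \mid (a_{ii}''-a_{jj}'')$. If $a_{12}''$ were contained in some $\mfp$, then $A''_\mfp$ would be scalar, a contradiction. Hence $a_{12}''$ is a \emph{unit}, not merely coprime to $n$. Your Phase~2 arithmetic with $nb_{11}\equiv 0\pmod{b_{12}}$ and the enlargement of $S$ are therefore unnecessary.

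\medskip

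\textbf{Why Phase~3 does not close.} The paper now proceeds by induction on $n$. Using that $a_{12}$ is a unit, one first conjugates by $1+\alpha E_{n1}$ to force $a_{n2}=1$, and then by $1+\beta E_{21}$ to clear $a_{11}$. The trailing $(n-1)\times(n-1)$ block $A_1$ then has a \emph{unit} off-diagonal entry (namely the image of $a_{n2}$), so $A_1$ is non-scalar modulo every prime and the same reasoning applied to $A_1$ again gives a unit $(1,2)$-entry; the induction proceeds cleanly.

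Your sequential clearing instead requires, at step $k$, that the new $(k,k+1)$-entry divide the accumulated diagonal value. You propose to ``re-invoke Theorem~\ref{thm:LF-normalform} on the trailing block'', but that theorem only makes the new top superdiagonal entry divide the off-diagonal entries and the diagonal \emph{differences} of the trailing block; it does not by itself force divisibility of the accumulated $(k,k)$-value, and it gives a unit superdiagonal entry only if the trailing block is non-scalar modulo every prime. You never verify this last condition for the trailing block, and without the paper's $a_{n2}=1$ trick it can fail. This is the point at which your outline breaks down.
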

\begin{proof}
If $A_{\mfp}=0$ for some $\mfp$, we can write $A=mA'$, where $m\in R$
and $A'$ is such that for every $\mfp\in\Specm R$ and every $a\in R/\mfp$
we have $A_{\mfp}'\neq a1_{n}$. Since $A'$ must be non-scalar Theorem~\ref{thm:LF-normalform}
implies that $A'$ is similar to a matrix $A''=(a_{ij}'')$ such that
$a_{12}''\mid a_{ij}''$ for all $i\neq j$ and $a_{12}''\mid(a_{ii}''-a_{jj}'')$
for all $1\leq i,j\leq n$. Since $A''$ satisfies $A_{\mfp}''\neq a1_{n}$
for any $\mfp\in\Specm R$ and $a\in R/\mfp$, the entry $a_{12}''$
must be a unit. We may therefore assume without loss of generality
that $A=A''$, so that in particular $a_{12}$ is a unit.

We now prove that $A$ is similar to a matrix with zero diagonal by
induction on $n$. If $n=2$, the matrix
\[
(1+E_{21}a_{11}a_{12}^{-1})A(1+E_{21}a_{11}a_{12}^{-1})^{-1}
\]
has zero diagonal. If $n>2$, conjugating $A$ by a matrix of the
form $1+\alpha E_{n1}$, $\alpha\in R$, we may assume that $a_{n2}=1$,
and then conjugating $A$ by a matrix of the form $1+\beta E_{21}$,
$\beta\in R$, we may further assume that $a_{11}=0$. Thus we may
assume that $A$ is of the form
\[
\begin{pmatrix}0 & x\\
y^{T} & A_{1}
\end{pmatrix},
\]
where $x,y\in R^{n-1}$, $A_{1}=(a_{ij}^{1})\in\M_{n-1}(R)$ with
$a_{n-1,1}^{1}=1$ and $\Tr(A_{1})=0$. By Theorem~\ref{thm:LF-normalform}
$A_{1}$ is similar to a matrix $A_{2}=(a_{ij}^{2})$ such that $a_{12}^{2}\mid a_{ij}^{2}$
for all $i\neq j$ and $a_{12}^{2}\mid(a_{ii}^{2}-a_{jj}^{2})$ for
all $1\leq i,j\leq n$. Since $(A_{2})_{\mfp}\neq a1_{n}$ for all
$\mfp\in\Specm R$ and $a\in R/\mfp$, the entry $a_{12}^{2}$ must
be a unit. So by induction there exists a $Q\in\GL_{n-1}(R)$ such
that $QA_{1}Q^{-1}=B_{1}$ is a matrix with zeros on the diagonal.
But then
\[
B=(1_{1}\oplus Q)A(1_{1}\oplus Q)^{-1}
\]
 has the desired form.
\end{proof}
A matrix in $\M_{n}(R)$ satisfying the conditions on the matrix $B$
in Theorem~\ref{thm:LF-normalform} will be said to be in \emph{Laffey-Reams
form}.

\section{\label{sec:Proof-Main}Proof of the main result}

In this section we give a proof of our main theorem on commutators,
Theorem~\ref{thm:Main}. We first prove a couple of lemmas used in
the proof.
\begin{lem}
\label{lem:GCD}Let $R$ be a PID. Then the following holds:
\begin{enumerate}
\item \label{enu:GCD-lemma abx}Let $a,b\in R$ be such that $(a,b)=(1)$,
and let $S$ be a finite set of maximal ideals of $R$. Then there
exists an $x\in R$ such that for all $\mfp\in S$ we have $a+bx\notin\mfp$.
\item \label{enu:GCD-lemma pqt}Let $\alpha,\beta\in R$ be such that $(\alpha,\beta)=(1)$.
Suppose that $\mfp$ is a maximal ideal of $R$ such that $|R/\mfp|\geq3$.
Then for every finite set $S$ of maximal ideals of $R$ such that
$\mfp\notin S$ there exists a $t\in R$ such that $t\notin\mfp$,
$t\in\mfq$ for all $\mfq\in S\setminus\{\mfp\}$ and $\alpha t+\beta\notin\mfp$.
\item \label{enu:GCD-lemma abc}Let $a,b,c\in R$ be such that $(a,b,c)=(1)$,
$(a,b)\neq(1)$ and $(a,c)\neq(1)$. Then there exists an $x\in R$
such that $(a+cx,b-ax)=(1)$.
\end{enumerate}
\end{lem}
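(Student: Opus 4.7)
My plan for all three parts is to reduce each to a finite collection of residue conditions at distinct maximal ideals and then invoke the Chinese Remainder Theorem, using that for any nonzero ideal $\mfa\subseteq R$ one has a CRT isomorphism $R/\mfa\cong\prod_{\mfp}R/\mfp^{e_\mfp}$.

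For (i), for each $\mfp\in S$ the hypothesis $(a,b)=(1)$ ensures that at least one of $a,b$ lies outside $\mfp$. If $a\notin\mfp$ I prescribe $x\equiv 0\pmod{\mfp}$ so that $a+bx\equiv a\not\equiv 0\pmod{\mfp}$; if $a\in\mfp$ then $b\notin\mfp$ and I prescribe $x\equiv 1\pmod{\mfp}$ so that $a+bx\equiv b\not\equiv 0\pmod{\mfp}$. A CRT lift then produces a common $x\in R$.

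For (ii), the conditions $t\in\mfq$ for $\mfq\in S\setminus\{\mfp\}$ are residue conditions at ideals disjoint from $\mfp$ and so are compatible with any prescription imposed modulo $\mfp$ itself. Modulo $\mfp$ the remaining requirements $t\notin\mfp$ and $\alpha t+\beta\notin\mfp$ forbid at most two residues of $R/\mfp$: the element $0$, and, when $\bar\alpha\neq 0$, also $-\bar\beta\bar\alpha^{-1}$. When $\bar\alpha=0$, the coprimality $(\alpha,\beta)=(1)$ forces $\bar\beta\neq 0$ and the second condition becomes automatic. Since $|R/\mfp|\geq 3$ at least one admissible residue modulo $\mfp$ remains, and CRT assembles the desired $t$.

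For (iii), the strategy is to classify the maximal ideals $\mfp$ at which $a+cx$ and $b-ax$ could simultaneously vanish and prescribe the residue of $x$ modulo each to prevent this. If $a\in\mfp$ then $b-ax\in\mfp$ forces $b\in\mfp$, so $\mfp\supseteq(a,b)$; there are only finitely many such $\mfp$ (since $(a,b)$ is a nonzero ideal), and as $(a,b,c)=(1)$ forces $c\notin\mfp$, it suffices to pick $x\not\equiv 0\pmod{\mfp}$, whence $a+cx\equiv cx\not\equiv 0\pmod{\mfp}$. If $a\notin\mfp$ then $\bar a$ is a unit modulo $\mfp$, and simultaneous vanishing forces $\bar x=\bar b\bar a^{-1}$ together with $\mfp\mid a^2+bc$; provided $a^2+bc\neq 0$, this yields a second finite family of bad ideals, each excluding a single residue for $x$. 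A CRT lift across the union of these two finite families produces $x$ for which $(a+cx,b-ax)$ lies in no maximal ideal, and so equals $R$. The main obstacle is the degenerate case $a^2+bc=0$, where the second family of bad ideals is a priori infinite; this must be handled by exploiting the forced factorisation of $a,b,c$ coming from $(a,b)\neq(1)$, $(a,c)\neq(1)$ and $(a,b,c)=(1)$ to construct $x$ directly.
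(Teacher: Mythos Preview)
Your arguments for parts (i) and (ii) via the Chinese Remainder Theorem are correct. The paper instead gives explicit constructions (e.g.\ for (i), $x$ is taken to be a generator of the product of those $\mfp\in S$ with $a\notin\mfp$), but your CRT approach is equally valid and arguably more transparent.

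For part (iii), your identification of the degenerate case $a^{2}+bc=0$ is exactly right, but this case cannot be ``handled'' as you suggest: the statement is in fact \emph{false} when $a^{2}+bc=0$. Over $R=\Z$, take $a=10$, $b=4$, $c=-25$. Then $(a,b,c)=(1)$, $(a,b)=(2)\neq(1)$, $(a,c)=(5)\neq(1)$, and $a^{2}+bc=100-100=0$. One computes
\[
a+cx=10-25x=5(2-5x),\qquad b-ax=4-10x=2(2-5x),
\]
so $(a+cx,b-ax)=(2-5x)$ for every $x\in\Z$, and $2-5x$ is never a unit. Thus no $x$ works. Your CRT argument for the non-degenerate case $a^{2}+bc\neq 0$ is, however, complete and correct.

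The paper's proof has precisely the same gap. It argues that $a+cx$ and $b-ax$ are coprime in $K[x]$ (where $K$ is the fraction field) because ``neither is a multiple of the other'' in $R[x]$, but the verification only rules out $a+cx=m(b-ax)$ for $m\in R$, not $m\in K$. When $a^{2}+bc=0$ one has $a+cx=\tfrac{a}{b}(b-ax)$ in $K[x]$, so the two linear forms \emph{are} associates in $K[x]$, and the resultant argument collapses. In the non-degenerate case the paper's route (producing a nonzero $D\in R$ from a B\'ezout relation in $K[x]$ and then avoiding the primes dividing $D$) and your direct CRT route are different in form but equivalent in substance; your version is somewhat cleaner since it bypasses the passage to $K[x]$.
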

\begin{proof}
To prove \ref{enu:GCD-lemma abx}, take $x$ to be a generator of
the product
\[
\prod_{\substack{\mfp\in S\\
a\notin\mfp
}
}\mfp
\]
and let $x=1$ if there is no $\mfp\in S$ such that $a\notin\mfp$.
Let $\mfp\in S$ be such that $a\in\mfp$. If $a+bx\in\mfp$, then
$ $$bx\in\mfp$ and since $(a,b)=(1)$ we have $x\in\mfp$, which
contradicts the definition of $x$. On the other hand, let $\mfp\in S$
be such that $a\notin\mfp$. If $a+bx\in\mfp$, then by the definition
of $x$ we have $bx\in\mfp$, so $a\in\mfp$, which is a contradiction.
Thus in either case, $a+bx\notin\mfp$.

Next, we prove \ref{enu:GCD-lemma pqt}. Since $|R/\mfp|\geq3$ there
exist two elements $r_{1},r_{2}\in R\setminus\mfp$ such that $r_{1}-r_{2}\notin\mfp$.
Let $s\in R$ be such that
\[
(s)=\prod_{\mfq\in S\setminus\{\mfp\}}\mfq.
\]
Then for $i=1,2$ we have $r_{i}s\notin\mfp$ and $r_{i}s\in\mfq$
for all $\mfq\in S\setminus\{\mfp\}$. Furthermore, if $\alpha r_{i}s+\beta\in\mfp$
for $i=1,2$, then $\alpha\in\mfp$ and $\beta\in\mfp$, contradicting
the hypothesis $(\alpha,\beta)=(1)$. Thus we may assume that $\alpha r_{1}s+\beta\notin\mfp$,
and $t=r_{1}s$ yields the desired element.

We now prove \ref{enu:GCD-lemma abc}. We first show that $a+cx$
and $b-ax$ are relatively prime as elements of $R[x]$, that is,
that none of them is a multiple of the other. Indeed, if $a+cx=m(b-ax)$
for some $m\in R$, then $a=mb$ and $c=-ma$ so $(1)=(a,b,c)=(mb,b,-m^{2}b)=(b)$,
which is impossible since $(a,b)\neq(1)$. Similarly, if $n(a+cx)=b-ax$
for some $n\in R$, then $b=na$ and $a=-nc$ so $(1)=(a,b,c)=(-nc,-n^{2}c,c)=(c)$,
which is impossible. Let $K$ be the field of fractions of $R$. Since
$a+cx$ and $b-ax$ are relatively prime as elements of $R[x]$, they
are relatively prime as element of $K[x]$. Thus there exists $f_{0},g_{0}\in K[x]$
such that $(a+cx)f_{0}+(b-ax)g_{0}=1$, and so there exists some $f,g\in R[x]$
such that
\begin{equation}
(a+cx)f+(b-ax)g=D\in R\setminus\{0\}.\label{eq:fgD}
\end{equation}
Let $S$ be the set of maximal ideals dividing $(D)$. By \ref{enu:GCD-lemma abx}
we can choose $x\in R$ such that for all $\mfp\in S$ we have $a+cx\notin\mfp$.
Now if $\mfp$ is a maximal ideal of $R$ such that $a+cx\in\mfp$
and $b-ax\in\mfp$, then $D\in\mfp$, by (\ref{eq:fgD}), and so $a+cx\notin\mfp$;
a contradiction. Thus there is no $\mfp\in\Specm R$ such that $a+cx\in\mfp$
and $b-ax\in\mfp$, that is, $a+cx$ and $b-ax$ are relatively prime.
\end{proof}
The following result is the Chinese remainder theorem for centralisers
of matrices over quotients of $R$. It will be used at a crucial step
in our proof of Theorem~\ref{thm:Main}.
\begin{lem}
\label{lem:Centr-product}Let $X\in\M_{n}(R)$ and let $\mfp_{1},\dots,\mfp_{\nu}$,
$\nu\in\N$ be maximal ideals in $R$. Then the map
\begin{align*}
C_{\M_{n}(R/(\mfp_{1}\cdots\mfp_{\nu}))}(X_{(\mfp_{1}\cdots\mfp_{\nu})}) & \longrightarrow\prod_{i=1}^{\nu}C_{\M_{n}(R/\mfp_{i})}(X_{\mfp_{i}})\\
g & \longmapsto(g_{\mfp_{1}},\dots,g_{\mfp_{\nu}}),
\end{align*}
is an isomorphism. \end{lem}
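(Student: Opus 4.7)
The plan is to deduce this directly from the Chinese remainder theorem, since the maximal ideals $\mfp_1,\dots,\mfp_\nu$ are distinct and hence pairwise coprime. First I would invoke the standard ring isomorphism
\[
\varphi: R/(\mfp_1\cdots\mfp_\nu)\longiso\prod_{i=1}^\nu R/\mfp_i,
\]
and observe that applying the functor $\M_n(-)$ preserves finite products, yielding an isomorphism of (non-commutative) rings
\[
\Phi:\M_n\bigl(R/(\mfp_1\cdots\mfp_\nu)\bigr)\longiso\prod_{i=1}^\nu\M_n(R/\mfp_i),
\]
since $\M_n$ is defined entrywise and the entrywise action of $\varphi$ commutes with matrix multiplication and addition.

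Next I would check that $\Phi$ sends $X_{(\mfp_1\cdots\mfp_\nu)}$ to $(X_{\mfp_1},\dots,X_{\mfp_\nu})$; this is immediate from the compatibility of the reduction maps with $\varphi$, i.e.\ from commutativity of the diagram formed by $R\to R/(\mfp_1\cdots\mfp_\nu)\to R/\mfp_i$ and $R\to R/\mfp_i$. Since $\Phi$ is a ring isomorphism, the centraliser of $X_{(\mfp_1\cdots\mfp_\nu)}$ corresponds under $\Phi$ to the centraliser of $(X_{\mfp_1},\dots,X_{\mfp_\nu})$ in the product ring $\prod_{i=1}^\nu\M_n(R/\mfp_i)$.

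Finally, I would use the elementary fact that in any product of rings $\prod_i S_i$, a tuple $(g_i)$ commutes with a tuple $(x_i)$ if and only if $g_ix_i=x_ig_i$ for every $i$. Applied here this gives
\[
C_{\prod_{i}\M_n(R/\mfp_i)}\bigl((X_{\mfp_1},\dots,X_{\mfp_\nu})\bigr)=\prod_{i=1}^\nu C_{\M_n(R/\mfp_i)}(X_{\mfp_i}),
\]
and composing with $\Phi$ yields the claimed isomorphism. There is no real obstacle: the content of the lemma is entirely that $\M_n(-)$ commutes with finite products of rings, which in turn is the Chinese remainder theorem promoted from $R$ to matrix rings. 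The only point to make explicit is that the map in the lemma really is the restriction of $\Phi$ to centralisers, and that this restriction is still a bijection because $\Phi$ restricts to a bijection on any subset defined by the equation $gX=Xg$.
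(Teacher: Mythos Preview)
Your proof is correct and rests on the same core idea as the paper's, namely the Chinese remainder theorem. The packaging differs slightly: the paper views the centraliser $\mathcal{C}=C_{\M_n(R/(\mfp_1\cdots\mfp_\nu))}(X_{(\mfp_1\cdots\mfp_\nu)})$ as an $R$-module and tensors the CRT isomorphism $R/(\mfp_1\cdots\mfp_\nu)\cong\prod_i R/\mfp_i$ with $\mathcal{C}$, identifying $R/\mfp_i\otimes_R\mathcal{C}$ with $C_{\M_n(R/\mfp_i)}(X_{\mfp_i})$. Your version is more elementary: you promote the CRT isomorphism to the matrix rings via $\M_n(\prod_i S_i)\cong\prod_i\M_n(S_i)$, then observe that a ring isomorphism carries centralisers to centralisers and that centralisers in a product split as the product of centralisers. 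This avoids tensor products entirely and makes the identification of the map with the one in the statement completely transparent; the paper's module-theoretic phrasing is equivalent but requires the reader to unpack why $R/\mfp_i\otimes_R\mathcal{C}$ is exactly the centraliser over $R/\mfp_i$.
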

\begin{proof}
Let $\mathcal{C}=C_{\M_{n}(R/(\mfp_{1}\cdots\mfp_{\nu}))}(X_{(\mfp_{1}\cdots\mfp_{\nu})})$.
Then $\mathcal{C}$ is a module over $R$. By the Chinese remainder
theorem we have an isomorphism $R/(\mfp_{1}\cdots\mfp_{\nu})\rightarrow\prod_{i=1}^{\nu}R/\mfp_{i}$
given by $a\mapsto(a_{\mfp_{1}},\dots,a_{\mfp_{\nu}})$, and tensoring
this by $\mathcal{C}$ yields
\begin{align*}
\mathcal{C} & \cong R/(\mfp_{1}\cdots\mfp_{\nu})\otimes_{R}\mathcal{C}\cong\big(\prod_{i=1}^{\nu}R/\mfp_{i}\big)\otimes_{R}\mathcal{C}\cong\prod_{i=1}^{\nu}(R/\mfp_{i}\otimes_{R}\mathcal{C})\\
 & \cong\prod_{i=1}^{\nu}C_{\M_{n}(R/\mfp_{i})}(X_{\mfp_{i}}).
\end{align*}
Tracking the maps shows that the effect of the above isomorphisms
on elements is given by
\[
g\longmapsto1\otimes g\longmapsto(1_{\mfp_{1}},\dots,1_{\mfp_{\nu}})\otimes g\longmapsto(1_{\mfp_{1}}\otimes g,\dots,1_{\mfp_{\nu}}\otimes g)\longmapsto(g_{\mfp_{1}},\dots,g_{\mfp_{\nu}}).
\]

\end{proof}
We now give the proof of our main theorem. Note that our proof in
the case $n=2$ is different from the case $n\geq3$, and that for
$n=2$, while our argument is not the shortest possible, yields the
stronger result that any $A\in\M_{2}(R)$ with trace zero can be written
as $A=[X,Y]$ for some $X,Y\in\M_{2}(R)$ and $X$ regular.
\begin{thm}
\label{thm:Main}Let $R$ be a PID and let $A\in\M_{n}(R)$ be a matrix
with trace zero. Then $A=[X,Y]$ for some $X,Y\in\M_{n}(R)$. \end{thm}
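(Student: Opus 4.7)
The plan is to apply the Laffey--Reams criterion (Proposition~\ref{prop:Criterion}) to a carefully constructed matrix $X\in\M_{n}(R)$ that is regular modulo every maximal ideal of $R$ and satisfies $\Tr(X^{r}A)=0$ for $r=0,1,\dots,n-1$; the criterion then produces $Y\in\M_{n}(R)$ with $A=[X,Y]$. The case $n=1$ is trivial, and $n=2$ can be handled by a separate direct argument that additionally yields a regular $X$, so I would focus on $n\geq3$.

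First I would reduce to a normal form. By Theorem~\ref{thm:LF-normalform}, $A$ is similar to a matrix in Laffey--Reams form; I may therefore assume that $a_{12}\mid a_{ij}$ for $i\neq j$, $a_{12}\mid(a_{ii}-a_{jj})$ for all $i,j$, and $a_{ij}=0$ whenever $j\geq i+2$ and $i\leq n-2$. In particular $A\equiv a_{11}1_{n}\pmod{a_{12}}$. When $a_{12}$ is a unit, a conjugate of $P_{n}$ can serve as $X$, essentially by the argument in the proof of Proposition~\ref{prop:Main-fields}: conjugating by $1+c(A)E_{21}$ forces $c(A)=0$, whence (\ref{eq:Tr_c(A)}) gives $\Tr(X^{r}A)=0$ directly, and the criterion finishes the job.

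The substantive case is when $a_{12}$ is a non-unit. Following \cite[Section~4]{Laffey-Reams}, I would take $X$ of the form $P_{n}+qE$ for an explicit perturbation matrix $E$ and a scalar $q\in R$; Lemma~\ref{lem:reg-triang} guarantees that every such $X$ is regular. The trace conditions $\Tr(X^{r}A)=0$ unfold into a system of congruences on $q$ and on the entries of the prospective $Y$, and Lemma~\ref{lem:GCD}\,\ref{enu:GCD-lemma abx} allows me to choose $q$ outside any prescribed finite set of primes, in particular those of index $2$ in $R$ where $R/\mfp$ fails to admit three distinct residues. Once $X$ is fixed, Proposition~\ref{prop:LF-criterion-fields} produces $M\in\M_{n}(F)$ with $[X,M]=A$, and it remains to adjust $M$ by a centraliser element of $X$ so that the result lies in $\M_{n}(R)$.

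The main obstacle, bypassed over $\Z$ by appealing to Dirichlet's theorem on primes in arithmetic progressions, is the simultaneous clearing of local obstructions at the distinct primes $\mfp_{1},\dots,\mfp_{\nu}$ dividing the denominator of $M$. Here Lemma~\ref{lem:Centr-product} is decisive: it identifies $C_{\M_{n}(R/(\mfp_{1}\cdots\mfp_{\nu}))}(X)$ with $\prod_{i=1}^{\nu}C_{\M_{n}(R/\mfp_{i})}(X_{\mfp_{i}})$, so local centraliser modifications assemble via the Chinese remainder theorem, producing a spanning set of the ambient centraliser modulo $\mfp_{1}\cdots\mfp_{\nu}$. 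The final ingredient is the choice of a parameter $t\in R$ lying in every $\mfq\in S\setminus\{\mfp\}$ but outside a distinguished prime $\mfp$, with $\alpha t+\beta\notin\mfp$ for appropriate $\alpha,\beta\in R$; this is furnished by Lemma~\ref{lem:GCD}\,\ref{enu:GCD-lemma pqt}, whose hypothesis $|R/\mfp|\geq3$ is precisely what the earlier exclusion of primes of index $2$ through the choice of $q$ secures. Combining these ingredients and substituting back yields the required $Y\in\M_{n}(R)$, completing the proof.
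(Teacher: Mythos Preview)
Your outline captures the broad architecture---reduce to Laffey--Reams form, build a test matrix $X$, check the trace conditions, invoke the criterion---but it misidentifies both the form of $X$ and the central obstacle. The matrix actually used is not $P_n+qE$; it is a specific lower-triangular matrix depending on three parameters $x,y,q$ (with $x_{21}=x$, $x_{31}=q$, $x_{ii}=-y$ for even $i$, and $1$s on the subdiagonal from row~$3$ down), and $x,y$ are first chosen with $(x,y)=(1)$ so that $\Tr(XA)=0$, which then forces $\Tr(X^rA)=0$ for all $r$. More importantly, your claim that ``Lemma~\ref{lem:reg-triang} guarantees that every such $X$ is regular'' is precisely where the argument would collapse: this $X$ is \emph{not} regular modulo every prime. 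It fails exactly at the primes in $V=\{\mfp\mid x+qy\in\mfp\}$, where $X_\mfp$ is only subregular ($\dim C(\mfp)=n+2$). Hence Proposition~\ref{prop:Criterion} does not apply directly, and the plan of ``adjusting $M$ by a centraliser element of $X$'' cannot work---that adjustment (Proposition~\ref{prop:LF-XYM}) requires regularity mod every prime.

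The real mechanism is different. One writes $mA=[X,Q]$ with $m$ having minimally many prime factors, shows the primes of $m$ all lie in $V$, and then computes the $(n+2)$-dimensional centraliser $C(\mfp)$ explicitly as in (\ref{eq:Centr-span}), assembling these over $\mfp_1\cdots\mfp_\nu$ via Lemma~\ref{lem:Centr-product}. This expresses $Q_0$ modulo $\mfp_1\cdots\mfp_\nu$ as $f(X_0)+\alpha E_{11}+\beta(E_{12}+yE_{13})+\gamma(E_{n1}+\kappa yE_{n-1,1})$; a non-obvious argument, hinging on the earlier choice of $q$ avoiding the set $S_0$, forces $\alpha\notin\mfp_i$ for every $i$. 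The decisive step your sketch omits is that one then \emph{replaces $X_0$ by} $X_1=X_0+tQ_1$, with $t$ chosen via Lemma~\ref{lem:GCD}\,\ref{enu:GCD-lemma pqt} so that $t\notin\mfp_i$ and $\alpha t+y\notin\mfp_i$; this makes $(X_1)_{\mfp_i}$ regular for each $i$ while preserving $[X_1,Q_1]=mA_0$. Now the prime powers in $m$ can be stripped off one by one using regularity of $X_1$, yielding $A_0=[X_1,Y]$ with $Y\in\M_n(R)$. So it is $X$, not the prospective $Y$, that must be modified, and the subregular centraliser analysis is the heart of the proof rather than a bookkeeping device.
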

\begin{proof}
For $n=1$ the result is trivial. First assume that $n=2$. By taking
out a suitable factor we may assume that the matrix
\[
A=\begin{pmatrix}a & b\\
c & -a
\end{pmatrix}
\]
satisfies $(a,b,c)=(1)$. Let $X=\begin{pmatrix}0 & 1\\
x_{1} & x_{2}
\end{pmatrix}\in\M_{2}(R)$. By Lemma~\ref{lem:reg-triang} the matrix $X$ is regular so it
is regular mod $\mfp$ for every maximal ideal $\mfp$ of $R$. Furthermore,
\[
\Tr(XA)=bx_{1}-ax_{2}+c,
\]
so if $(a,b)=(1)$ we can find $x_{1}$ and $x_{2}$ such that $\Tr(XA)=0$,
and Proposition~\ref{prop:Criterion} implies that $A=[X,Y]$, for
some $Y\in\M_{2}(R)$. Similarly, the transpose $X^{T}$ of $X$ is
also regular, and
\[
\Tr(X^{T}A)=cx_{1}-ax_{2}+b,
\]
so if $(a,c)=(1)$ we can find $x_{1}$ and $x_{2}$ such that $\Tr(X^{T}A)=0$,
and so $A=[X^{T},Y]$, for some $ $$Y\in\M_{2}(R)$. Hence, in case
$(a,b)=(1)$ or $(a,c)=(1)$ we are done. Assume therefore that $(a,b)\neq(1)$
and $(a,c)\neq(1)$. If we let $T=1+xE_{12}\in\M_{2}(R)$ for some
$x\in R$, we have
\[
TAT^{-1}=\begin{pmatrix}a+cx & b-ax-x(a+cx)\\
c & -a-cx
\end{pmatrix}.
\]
Now $a+cx$ and $b-ax-x(a+cx)$ are relatively prime if and only if
$a+cx$ and $b-ax$ are relatively prime. By Lemma~\ref{lem:GCD}\,\ref{enu:GCD-lemma abc}
we can choose $x\in R$ such that $(a'+c'x,b'+a''x)=(1)$, and hence
such that the $(1,1)$ and $(1,2)$ entries in $TAT^{-1}$ are relatively
prime. As we have already seen, this means that we can find $x_{1}$
and $x_{2}$ such that $\Tr(XTAT^{-1})=0$, so Proposition~\ref{prop:Criterion}
yields $A=[T^{-1}XT,Y]$ for some $Y\in\M_{2}(R)$.

Assume now that $n\geq3$. If $A$ is a scalar matrix we obviously
have $\Tr(J_{n}(0)^{r}A)=0$ for all $r\geq0$, so Proposition~\ref{prop:Criterion}
yields the desired conclusion. We may therefore henceforth assume
that $A$ is non-scalar. Write $A=(a_{ij})$ for $1\leq i,j,\leq n$.
By Theorem~\ref{thm:LF-normalform} we may assume that $A$ is in
Laffey-Reams form. If $d\in R$ is such that $(a_{ij},a_{ii}-a_{jj}\mid i\neq j,1\leq i,j\leq n)=(d)$,
we can write $A=dA'$ where $A'=(a'_{ij})\in\M_{n}(R)$ is in Laffey-Reams
form and $(a_{11}',a_{12}')=(1)$. It thus suffices to assume that
$A=A'$ so that $(a_{11},a_{12})=(1)$, $a_{12}\mid a_{ij}$ for $i\neq j$,
$a_{12}\mid(a_{ii}-a_{jj})$ for $1\leq i,j\leq n$, and $a_{ij}=0$
for $j\geq i+2$. Let $k=\lfloor n/2\rfloor$. For $x,y,q\in R$ define
the matrix $X=(x_{ij})\in\M_{n}(R)$ by
\[
(x_{ij})=\begin{cases}
x_{ii}=-y & \text{for }i=2,4,\dots,2k,\\
x_{21}=x,\\
x_{31}=q,\\
x_{j,j-1}=1 & \text{for }j=3,4,\dots,n,\\
x_{ij}=0 & \text{otherwise}.
\end{cases}
\]
Recall that for any $B=(b_{ij})\in\M_{n}(R)$ we write $c(B)=\sum_{i=1}^{k}b_{2i,2i}$.We
have
\[
\Tr(XA)=xa_{12}+a_{23}+\dots+a_{n-1,n}-yc(A).
\]
We claim that $\Tr(XA)=0$ implies that $\Tr(X^{r}A)=0$ for all $r\geq0$.
To see this, observe that the matrix $X^{2}+yX$ is lower triangular
and its $(i,j)$ entry is $0$ if $j\geq i-1$. Since $\Tr(E_{ij}A)=0$
if $j<i-1$ (since $a_{ij}=0$ for $j\geq i+2$), it follows that
$\Tr((X^{2}+yX)A)=0$, so if $\Tr(XA)=0$ we get $\Tr(X^{2}A)=0$.
More generally, using the fact that $X$ is lower triangular, we have
$\Tr((X^{r}+yX^{r-1})A)=0$, and working inductively we get $\Tr(X^{r}A)=0$
for all $r\geq0$.

Assume for the moment that $a_{12}\mid c(A)$ and let $M=1-c(A)a_{12}^{-1}E_{21}\in\M_{n}(R)$.
Then
\[
c(MAM^{-1})=0,
\]
so Proposition~\ref{prop:Criterion} together with (\ref{eq:Tr_c(A)})
and the fact that $P_{n}$ is regular imply $MAM^{-1}=[P_{n},Y]$,
for some $Y\in\M_{n}(R)$. Thus in this case $A=[M^{-1}P_{n}M,M^{-1}YM]$,
so we may henceforth assume that
\begin{equation}
a_{12}\nmid c(A).\label{eq:a12-notdiv-c(A)}
\end{equation}
We now show that there exist elements $x,y\in R$ with $(x,y)=(1)$
and such that $\Tr(XA)=0$. To this end, consider the equation
\[
xa_{12}+a_{23}+\dots+a_{n-1,n}=yc(A),\qquad x,y\in R.
\]
Since $a_{12}$ divides $a_{23},\dots,a_{n-1,n}$, this may be written
\begin{equation}
a_{12}(x+l)=yc(A),\label{eq:Diophant-xy}
\end{equation}
for some $l\in R$. Let $d\in R$ be a generator of $(a_{12},c(A))$.
Then (\ref{eq:Diophant-xy}) is equivalent to
\begin{align*}
x & =hc(A)d^{-1}-l\\
y & =ha_{12}d^{-1},
\end{align*}
for any $h\in R$. Choose $h$ to be a generator of the product of
all maximal ideals $\mfp$ of $R$ such that $a_{12}d^{-1}\in\mfp$
and $l\notin\mfp$ (and let $h=1$ if no such $\mfp$ exist). Suppose
that $(x,y)\in(p)$ for some prime element $p\in R$. Then $y\in(p)$
and so $a_{12}d^{-1}\in(p)$ or $h\in(p)$. If $a_{12}d^{-1}\in(p)$
and $l\not\in(p)$, then $h\in(p)$, so $x\notin(p)$. If $a_{12}d^{-1}\in(p)$
and $l\in(p)$, then $h\notin(p)$ and since $(a_{12}d^{-1},c(A)d^{-1})=(1)$
we have $x\notin(p)$. Furthermore, if $h\in(p)$ then $l\notin(p)$
so $x\notin(p)$. Thus $(x,y)=(1)$. If $y$ is a unit then $a_{12}d^{-1}$
must be a unit, and so $a_{12}\mid c(A)$, contradicting (\ref{eq:a12-notdiv-c(A)}).
Thus $y$ is not a unit, and so $x^{2}a_{12}\notin(ya_{12})$. Since
$a_{12}$ divides each of $a_{11}-a_{22}$, $a_{21}$, $a_{31}$ and
$a_{32}$, we have $xy(a_{11}-a_{22})-y^{2}(a_{21}+ya_{31}+xa_{32})\in(ya_{12})$.
Thus, we must have
\begin{equation}
x^{2}a_{12}+xy(a_{11}-a_{22})-y^{2}(a_{21}+ya_{31}+xa_{32})\neq0.\label{eq:not-zero}
\end{equation}
From now on let $x$ and $y$ be as above, so that $(x,y)=(1)$ and
$\Tr(XA)=0$. Next, we specify the entry $q$ in $X$.

Let $S_{0}$ be the set of maximal ideals $\mfp$ of $R$ such that
$x^{2}a_{12}+xy(a_{11}-a_{22})-y^{2}(a_{21}+ya_{31}+xa_{32})\in\mfp$,
and let
\[
S=S_{0}\cup\{\mfp\in\Specm R\mid|R/\mfp|=2\}.
\]
Note that $S$ is a finite set because of (\ref{eq:not-zero}) together
with the fact that for any PID $R'$ (or any Dedekind domain), there
are only finitely many $\mfp\in\Specm R'$ such that $|R'/\mfp|=2$.
By Lemma~\ref{lem:GCD}\,\ref{enu:GCD-lemma abx} (with $r=1$)
we can thus choose $q\in R$ such that
\[
x+qy\notin\mfp,\quad\text{for all }\mfp\in S.
\]
Assume from now on that $q$ has been chosen in this way. Let $V$
be the set of maximal ideals of $R$ such that $x+qy\in\mfp$, that
is,
\[
V=\{\mfp\in\Specm R\mid x+qy\in\mfp\}.
\]
By the choice of $q$ we thus have in particular that
\begin{equation}
\mfp\in V\Longrightarrow x^{2}a_{12}+xy(a_{11}-a_{22})-y^{2}(a_{21}+ya_{31}+xa_{32})\notin\mfp.\label{eq:pinV-polynotinp}
\end{equation}
Note that for every $\mfp\in V$ we have $y\notin\mfp$ since $(x,y)=(1)$.
Note also that $S\cap V=\varnothing$.

We claim that $X_{\mfp}\in\M_{n}(R/\mfp)$ is regular for every maximal
ideal $\mfp$ not in $V$. To show this, let $\mfp\in(\Specm R)\setminus V$
and let
\[
M=\begin{pmatrix}x+qy & 0\\
q & 1
\end{pmatrix}\oplus1_{n-2}\in\M_{n}(R).
\]
Since $x+qy\not\in\mfp$ the image $M_{\mfp}\in\M_{n}(R/\mfp)$ of
$M$ is invertible and, letting $y_{\mfp}$ denote the image of $y$
in $R/\mfp$, we have
\[
M_{\mfp}X_{\mfp}M_{\mfp}^{-1}=(m_{ij})=\begin{cases}
m_{ii}=-y_{\mfp} & \text{for }i=2,4,\dots,2k,\\
m_{j,j-1}=1 & \text{for }j=2,3,\dots,n,\\
m_{ij}=0 & \text{otherwise}.
\end{cases}
\]
It follows from Lemma~\ref{lem:reg-triang} that $M_{\mfp}X_{\mfp}M_{\mfp}^{-1}$
is regular, and thus $X_{\mfp}$ is regular.

By our choice of $q$ we have $\mfp\notin V$ if $\mfp\in S$, so
$X_{\mfp}$ is regular for any $\mfp\in S$, and $S$ is non-empty.
By Proposition~\ref{prop:Reg-mod-m} we have that $X$ is regular
as an element in $\M_{n}(F)$, where $F$ is the field of fractions
of $R$. By our choice of $x$ and $y$ we have $\Tr(X^{r}A)=0$ for
$r=0,1,\dots,n-1$, so Proposition~\ref{prop:LF-criterion-fields}
implies that we can write $A=[X,Q]$, for some $Q\in\M_{n}(F)$. Clearing
denominators in $Q$ we find that there exists a non-zero element
$m_{0}\in R$ such that $m_{0}A\in[X,\M_{n}(R)]$. We now highlight
a step which we will refer to in the following:

\MyQuote{Let $m\in R$ be such that it has the minimal number of (not necessarily distinct) prime factors among all $m'\in R$ such that $m'A\in[X,\M_n(R)]$, and let $Q\in\M_n(R)$ be such that $mA=[X,Q]$.} We
show that the only maximal ideals containing $m$ are those in $V$.
Suppose that $\mfp=(p)\in(\Specm R)\setminus V$ and that $m\in\mfp$.
Then $0=[X_{\mfp},Q{}_{\mfp}]$, and since $X_{\mfp}$ is regular
there exists a polynomial $f\in R[T]$ such that $Q=f(X)+pQ'$ for
some $Q'\in\M_{n}(R)$, so $mA=[X,f(X)+pQ']=[X,pQ']$ and thus $mp^{-1}A=[X,Q']$,
which contradicts $(*)$. Thus, if $m\in\mfp$ for some $\mfp\in\Specm R$,
then we must have $\mfp\in V$. Let $\mfp_{1},\mfp_{2},\dots,\mfp_{\nu}$,
$\nu\in\N$ be the elements of $V$ such that $m\in\mfp_{i}$. For
each $\mfp_{i}$, choose a generator $p_{i}\in R$, so that $\mfp_{i}=(p_{i})$,
for $i=1,\dots,\nu$. We then have
\[
(m)=(p_{1}^{e_{1}}p_{2}^{e_{2}}\cdots p_{\nu}^{e_{\nu}}),
\]
for some $e_{i}\in\N$, $1\leq i\leq\nu$.

The strategy is now to show that $X$ can be replaced by a matrix
$X_{1}$ which is regular mod $\mfp$ for every $\mfp\in V$. Let
\[
N=1+qE_{21}\in\M_{n}(R).
\]
For ease of calculation we will consider the matrices
\[
A_{0}=NAN^{-1},\quad X_{0}=NXN^{-1},\quad Q_{0}=NQN^{-1}.
\]
Let $\mfp\in V$ be any of the ideals $\mfp_{1},\mfp_{2},\dots,\mfp_{\nu}$.
We have
\begin{equation}
(X_{0})_{\mfp}=\begin{pmatrix}0 & 0\\
0 & W_{\mfp}
\end{pmatrix}=(0)\oplus W_{\mfp},\label{eq:Wp}
\end{equation}
where $W_{\mfp}\in\M_{n-1}(R/\mfp)$ is regular. We wish to determine
the dimension of the centraliser
\[
C(\mfp):=C_{\M_{n}(R/\mfp)}((X_{0})_{\mfp}).
\]
Since $(x,y)=(1)$, we have $y_{\mfp}\neq0$, so the Jordan form of
$(X_{0})_{\mfp}$ is
\[
J_{k}(-y_{\mfp})\oplus J_{n-k-1}(0)\oplus J_{1}(0),
\]
where $k=\lfloor n/2\rfloor$, as before. We have an isomorphism of
$R/\mfp$-vector spaces
\[
C(\mfp)\cong C_{\M_{k}(R/\mfp)}(J_{k}(-y_{\mfp}))\oplus C_{\M_{n-k}(R/\mfp)}(J_{k}(0)\oplus J_{1}(0)).
\]
Since $\dim C_{\M_{k}(R/\mfp)}(J_{k}(-y_{\mfp}))=k$ it remains to
determine the dimension of $C_{\M_{n-k}(R/\mfp)}(J_{n-k-1}(0)\oplus J_{1}(0))$.
A matrix
\[
H=\begin{pmatrix}H_{11} & H_{12}\\
H_{21} & H_{22}
\end{pmatrix}\in\M_{n-k}(R/\mfp),
\]
where $H_{11}$ is a $(n-k-1)\times(n-k-1)$ block, $H_{22}$ is a
$1\times1$ block, and the other blocks are of compatible sizes, commutes
with $J_{n-k-1}(0)\oplus J_{1}(0)$ if and only if
\[
H_{11}J_{n-k-1}(0)=J_{n-k-1}(0)H_{11},\quad H_{12}\in\begin{pmatrix}R/\mfp\\
0
\end{pmatrix},\quad H_{21}\in(0,R/\mfp).
\]
Hence $\dim C_{\M_{n-k}(R/\mfp)}(J_{n-k-1}(0)\oplus J_{1}(0))=n-k-1+1+1+1$,
and so
\[
\dim C(\mfp)=n+2,
\]
that is, $(X_{0})_{\mfp}$ is subregular (cf.~\cite{Springer-Steinberg}).
Next, we need the dimension of $(R/\mfp)[(X_{0})_{\mfp}]$ (the algebra
of polynomials in $(X_{0})_{\mfp}$ over the field $R/\mfp$). Since
$(R/\mfp)[(X_{0})_{\mfp}]\cong(0)\oplus(R/\mfp)[W_{\mfp}]$ and $W_{\mfp}$
is regular, we have $\dim(R/\mfp)[(X_{0})_{\mfp}]=n-1$.

We now find a basis for $C(\mfp)$. We know that $(R/\mfp)[(X_{0})_{\mfp}]$
is an $(n-1)$-dimensional subspace of $C(\mfp)$. Moreover, direct
verification shows that $E_{11}$ and $E_{12}+y_{\mfp}E_{13}$ are
in $C(\mfp)$. Let $\kappa=n+1-2\lfloor(n+1)/2\rfloor$, that is,
$\kappa$ is $0$ if $n$ is odd and $1$ if $n$ is even. Then we
also have
\[
E_{n1}+\kappa y_{\mfp}E_{n-1,1}\in C(\mfp).
\]
Since $(X_{0})_{\mfp}$ is lower triangular and the first column of
$(X_{0})_{\mfp}^{i}$ is $0$ for all $i\in\N$, the intersection
of $(R/\mfp)[(X_{0})_{\mfp}]$ with the $R/\mfp$-span $\langle E_{11},E_{12}+y_{\mfp}E_{13},E_{n1}+\kappa y_{\mfp}E_{n-1,1}\rangle$
is $0$. Since $\{E_{11},E_{12}+y_{\mfp}E_{13},E_{n1}+\kappa y_{\mfp}E_{n-1,1}\}$
is linearly independent, $\dim(R/\mfp)[(X_{0})_{\mfp}]=n-1$ and $\dim C(\mfp)=n+2$,
we must have
\begin{equation}
C(\mfp)=\langle(R/\mfp)[(X_{0})_{\mfp}],E_{11},E_{12}+y_{\mfp}E_{13},E_{n1}+\kappa y_{\mfp}E_{n-1,1}\rangle.\label{eq:Centr-span}
\end{equation}
We observe that the matrix $E_{n1}+\kappa yE_{n-1,1}\in\M_{n}(R)$,
whose image in $\M_{n}(R/\mfp)$ is $E_{n1}+\kappa y_{\mfp}E_{n-1,1}$,
satisfies
\begin{equation}
E_{n1}+\kappa yE_{n-1,1}\in C_{\M_{n}(R)}(X_{0}).\label{eq:E+kyE-inC(X)}
\end{equation}
Let $\mfa=\prod_{i=1}^{\nu}\mfp_{i}$, so that $\mfa=(p_{1}\cdots p_{\nu})$.
By (\ref{eq:Centr-span}) and Lemma~\ref{lem:Centr-product} we have
\begin{equation}
C_{\M_{n}(R/\mfa)}(X_{\mfa})=\langle(R/\mfa)[(X_{0})_{\mfa}],E_{11},E_{12}+y_{\mfa}E_{13},E_{n1}+\kappa y_{\mfa}E_{n-1,1}\rangle.\label{eq:Centr-span-severalprimes}
\end{equation}
Since $[X_{0},Q_{0}]=mA_{0}$ we have $([X_{0},Q_{0}])_{\mfa}=0$,
that is, $(Q_{0})_{\mfa}\in C_{\M_{n}(R/\mfa)}((X_{0})_{\mfa})$.
Hence, by (\ref{eq:Centr-span-severalprimes})
\[
Q_{0}=f(X_{0})+\alpha E_{11}+\beta(E_{12}+yE_{13})+\gamma(E_{n1}+\kappa yE_{n-1,1})+p_{1}\cdots p_{\nu}D,
\]
for some $\alpha,\beta,\gamma\in R$, $f(T)\in R[T]$ and $D\in\M_{n}(R)$.
Using (\ref{eq:E+kyE-inC(X)}) we get
\begin{align}
[X_{0},Q_{0}] & =[X_{0},f(X_{0})+\alpha E_{11}+\beta(E_{12}+yE_{13})\label{eq:X0-Q0}\\
 & \quad+\gamma(E_{n1}+\kappa yE_{n-1,1})+p_{1}\cdots p_{\nu}D]\nonumber \\
 & =[X_{0},\alpha E_{11}+\beta(E_{12}+yE_{13})+p_{1}\cdots p_{\nu}D]\nonumber \\
 & =[X_{0},Q_{1}],\nonumber
\end{align}
where
\[
Q_{1}:=\alpha E_{11}+\beta(E_{12}+yE_{13})+p_{1}\cdots p_{\nu}D.
\]
Let $i\in\N$ be such that $1\leq i\leq\nu$. If $(\alpha,\beta)\subseteq\mfp_{i}$
then $[X_{0},Q_{1}]\in p_{i}[X_{0},\M_{n}(R)]$ and so $mp_{i}^{-1}A\in[X,\M_{n}(R)]$,
contradicting $(*)$. Thus either $\alpha\notin\mfp_{i}$ or $\beta\notin\mfp_{i}$.
We show that the case where $\alpha\in\mfp_{i}$ and $\beta\notin\mfp_{i}$
cannot arise. Since $mA_{0}=[X_{0},Q_{0}]=[X_{0},Q_{1}]$, we have
$m\cdot\Tr(Q_{1}A_{0})=0$, whence $\Tr(Q_{1}A_{0})=0$. Together
with $\alpha\in\mfp_{i}$ and $\beta\notin\mfp_{i}$ this implies
that
\[
\Tr((E_{12}+yE_{13})A_{0})\in\mfp_{i}.
\]
Recalling that $A_{0}=NAN^{-1}$ we thus get
\[
-q^{2}a_{12}+q(a_{11}-a_{22})+a_{21}+ya_{31}-qya_{32}\in\mfp_{i}
\]
and, after multiplying by $y^{2}$,
\[
-q^{2}y^{2}a_{12}+qy^{2}(a_{11}-a_{22})+y^{2}(a_{21}+ya_{31}-qya_{32})\in\mfp_{i}.
\]
Since $\mfp_{i}\in V$ we have $qy\in-x+\mfp_{i}$ and so
\[
x^{2}a_{12}+xy(a_{11}-a_{22})-y^{2}(a_{21}+ya_{31}+xa_{32})\in\mfp_{i}.
\]
But by our choice of $q$ we have
\[
x^{2}a_{12}+xy(a_{11}-a_{22})-y^{2}(a_{21}+ya_{31}+xa_{32})\notin\mfp,
\]
for all $\mfp\in V$, which together with (\ref{eq:pinV-polynotinp})
yields a contradiction. Therefore we cannot have $\alpha\in\mfp_{i}$
and $\beta\notin\mfp_{i}$, so we must have $\alpha\notin\mfp_{i}$.
We have thus shown that
\[
\alpha\notin\mfp_{i},\quad\text{for all }i=1,\dots,\nu.
\]
By Lemma~\ref{lem:GCD}\,\ref{enu:GCD-lemma pqt} and our choice
of $S$ there exists a $t\in R$ such that
\begin{equation}
t\notin\mfp_{i}\quad\text{and}\quad\alpha t+y\notin\mfp_{i},\quad\text{for all }i=1,\dots,\nu.\label{eq:at+y}
\end{equation}
Define the matrix
\begin{align*}
X_{1} & =X_{0}+tQ_{1}.
\end{align*}
Let $\mfp$ be any of the ideals $\mfp_{1},\mfp_{2},\dots,\mfp_{\nu}$.
Let $\alpha_{\mfp},\beta_{\mfp},t_{\mfp}$ denote the images of $\alpha$,
$\beta$ and $t$ in $R/\mfp$, respectively. As before, let $y_{\mfp}$
denote the image of $y$ in $R/\mfp$. If we let
\[
L_{\mfp}=\begin{pmatrix}1 & \beta_{\mfp}\alpha_{\mfp}^{-1} & y_{\mfp}\beta_{\mfp}\alpha_{\mfp}^{-1}\\
0 & 1 & 0\\
0 & 0 & 1
\end{pmatrix}\oplus1_{n-3}\in\M_{n}(R/\mfp),
\]
then direct verification shows that $L_{\mfp}(X_{1})_{\mfp}L_{\mfp}^{-1}=\alpha_{\mfp}t_{\mfp}E_{11}\oplus W_{\mfp}$,
where $W_{\mfp}$ is the matrix in (\ref{eq:Wp}). Since $W_{\mfp}$
is regular and neither of its eigenvalues $0$ or $-y_{\mfp}$ equals
$\alpha_{\mfp}t_{\mfp}$ by (\ref{eq:at+y}), the matrix $\alpha_{\mfp}t_{\mfp}E_{11}\oplus W_{\mfp}$,
and hence $(X_{1})_{\mfp}\in\M_{n}(R/\mfp)$, is regular. We thus
see that $(X_{1})_{\mfp_{i}}$ is regular for all $i=1,\dots,\nu$.

By (\ref{eq:X0-Q0}) we have
\[
mA_{0}=[X_{0},Q_{0}]=[X_{0},Q_{1}]=[X_{1},Q_{1}],
\]
and since $(X_{1})_{\mfp_{i}}$ is regular and $m\in\mfp_{i}$ for
all $i=1,\dots,\nu$, we get $Q_{1}=g_{i}(X_{1})+p_{i}Q{}_{1}^{(i)}$,
for some $g_{i}(T)\in R[T]$ and $Q{}_{1}^{(i)}\in\M_{n}(R)$. Thus
\[
mA_{0}=[X_{1},g_{i}(X_{1})+p_{i}Q{}_{1}^{(i)}]=p_{i}[X_{1},Q{}_{1}^{(i)}],
\]
and so $mp_{i}^{-1}A_{0}=[X_{1},Q{}_{1}^{(i)}]$. Repeating the argument
if necessary, we obtain $mp_{i}^{-e_{i}}A_{0}\in[X_{1},\M_{n}(R)]$.
Running through each $i=1,\dots,\nu$ we obtain $A_{0}=[X_{1},Y]$
for some $Y\in\M_{n}(R)$, and hence $A=[N^{-1}X_{1}N,NYN^{-1}]$.
\end{proof}
By a theorem of Hungerford \cite{Hungerford} every principal ideal
ring (PIR) is a finite product of rings, each of which is a homomorphic
image of a PID. Together with Theorem~\ref{thm:Main} this immediately
implies the following:
\begin{cor}
\label{cor:Coroll-Main}Let $R$ be a PIR (not necessarily an integral
domain) and let $A\in\M_{n}(R)$, $n\geq2$, be a matrix with trace
zero. Then $A=[X,Y]$ for some $X,Y\in\M_{n}(R)$.
\end{cor}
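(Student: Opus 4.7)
The plan is to use Hungerford's theorem to reduce to the case already handled by Theorem~\ref{thm:Main}. By Hungerford's result, there is a ring isomorphism $R \cong R_1 \times \cdots \times R_s$, where each $R_i = S_i / I_i$ is a homomorphic image of a PID $S_i$. This induces an isomorphism $\M_n(R) \cong \prod_{i=1}^{s} \M_n(R_i)$ under which the trace and the commutator bracket are both computed coordinate-wise. If $A$ corresponds to the tuple $(A_1,\ldots,A_s)$, then $\Tr(A_i) = 0$ for every $i$, and it suffices to show that each $A_i$ is a commutator in $\M_n(R_i)$; a commutator decomposition in each factor can then be reassembled into one for $A$.

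Fix an index $i$ and write $\pi\colon S_i \to R_i$ for the quotient map, extended entry-wise to $\M_n(S_i) \to \M_n(R_i)$. First I would choose any entry-wise lift $B_i \in \M_n(S_i)$ of $A_i$. Because $\pi(\Tr(B_i)) = \Tr(A_i) = 0$, the scalar $\Tr(B_i)$ lies in $I_i$, so adjusting a single diagonal entry produces a matrix $\tilde{A}_i \in \M_n(S_i)$ with $\Tr(\tilde{A}_i) = 0$ and $\pi(\tilde{A}_i) = A_i$; explicitly, take $\tilde{A}_i = B_i - \Tr(B_i) E_{11}$. Applying Theorem~\ref{thm:Main} over the PID $S_i$ yields matrices $X_i, Y_i \in \M_n(S_i)$ with $\tilde{A}_i = [X_i, Y_i]$, and then $A_i = [\pi(X_i), \pi(Y_i)]$ in $\M_n(R_i)$.

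Finally, the tuples $(\pi(X_1),\ldots,\pi(X_s))$ and $(\pi(Y_1),\ldots,\pi(Y_s))$ correspond under the product decomposition to elements $X, Y \in \M_n(R)$ satisfying $A = [X,Y]$, completing the proof. There is no genuine obstacle here: the only slight subtlety is that a trace-zero matrix over $R_i$ need not admit a trace-zero lift to $\M_n(S_i)$ automatically, but this is immediately repaired by modifying a single diagonal entry as above. Everything else is just bookkeeping with the product decomposition supplied by Hungerford's theorem.
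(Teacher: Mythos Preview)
Your proof is correct and follows exactly the approach indicated in the paper: the paper simply remarks that Hungerford's theorem together with Theorem~\ref{thm:Main} ``immediately implies'' the corollary, and you have spelled out the routine details of that implication (the product decomposition, the coordinate-wise reduction, and the trace-zero lift via $\tilde{A}_i = B_i - \Tr(B_i)E_{11}$). There is nothing to add.
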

We end this section by proving a strengthened version of Theorem~\ref{thm:Main}
for $n=3$.
\begin{prop}
\label{prop:n3regX}Let $R$ be a PID and let $A\in\M_{3}(R)$ be
a matrix with trace zero. Then $A=[X,Y]$ for some $X,Y\in\M_{3}(R)$
such that $X_{\mfp}$ is regular for all $\mfp\in\Specm R$. \end{prop}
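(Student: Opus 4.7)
The plan is to find $X \in \M_3(R)$ which is regular over $R$ (and hence, by Lemma~\ref{lem:Reg-extnscalars}, regular modulo every $\mfp$) with $\Tr(X^r A) = 0$ for $r = 0, 1, 2$; Proposition~\ref{prop:Criterion} then supplies the required $Y \in \M_3(R)$.

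If $A$ is scalar I would take $X = J_3(0)$, which is regular over $R$ by Lemma~\ref{lem:reg-triang}, is nilpotent of index~$3$, and hence satisfies $\Tr(X^r A) = 0$ for all $r \ge 0$ whenever $\Tr A = 0$. This settles the scalar case at once.

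For non-scalar $A$ I would first apply Theorem~\ref{thm:LF-normalform} together with the initial reduction from the proof of Theorem~\ref{thm:Main} to bring $A$ into Laffey--Reams form with $(a_{11}, a_{12}) = (1)$; in particular, all off-diagonal entries $a_{ij}$ (with $i \ne j$) and all differences $a_{ii}-a_{jj}$ lie in $(a_{12})$, and the trace-zero condition on $A$ forces $3a_{22} \in (a_{12})$. Then I would seek $X$ of the Hessenberg shape
\[
X = \begin{pmatrix} s_1 & 1 & 0 \\ s_2 & s_3 & 1 \\ s_4 & s_5 & s_6 \end{pmatrix},
\]
so that Lemma~\ref{lem:reg-triang} guarantees $X$ regular over $R$. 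A direct expansion of the trace conditions, using the Laffey--Reams relations, shows that $\Tr(XA)=0$ is a single linear equation in the $s_i$ which does not involve $s_4$, while $\Tr(X^2 A)=0$ depends linearly on $s_4$ with coefficient $a_{12}(1+u_{23})$, where $u_{23} = a_{23}/a_{12}$.

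The first equation is easily solved: pick $s_1+s_3+s_6$ so that $(s_1+s_3+s_6)a_{22} \in (a_{12})$ (possible since $3a_{22}\in(a_{12})$) and read off $s_2$. The hard part will be the second equation: one must choose the remaining free parameters $s_1,s_3,s_5,s_6$ so that the resulting right-hand side of the equation for $s_4$ lies in $(a_{12}(1+u_{23}))$. I would handle this by the same device as the choice of $q$ in the proof of Theorem~\ref{thm:Main}: Lemma~\ref{lem:GCD}\,\ref{enu:GCD-lemma abx} lets us adjust one of the free parameters so as to avoid the finite set of prime divisors of $1+u_{23}$ that obstruct divisibility, while the Laffey--Reams congruences take care of divisibility by $a_{12}$. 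Having produced such an $X$, Proposition~\ref{prop:Criterion} yields $Y \in \M_3(R)$ with $A=[X,Y]$, which is what is required.
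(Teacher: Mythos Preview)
Your approach differs substantially from the paper's, and the second trace condition is where it breaks down.

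You aim for the stronger conclusion that $X$ is regular over $R$ itself, choosing $X$ in Hessenberg form so that Lemma~\ref{lem:reg-triang} applies automatically. This forces you to solve \emph{two} independent equations $\Tr(XA)=0$ and $\Tr(X^{2}A)=0$ in the parameters $s_i$. Your treatment of $\Tr(X^{2}A)=0$ has a real gap. You correctly identify that the coefficient of $s_4$ is $a_{12}(1+u_{23})$, so solving for $s_4$ requires the remaining expression $T$ to lie in $(a_{12}(1+u_{23}))$. But Lemma~\ref{lem:GCD}\,\ref{enu:GCD-lemma abx} produces elements \emph{avoiding} a finite set of primes; it does not manufacture divisibility. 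What you need is the opposite: to make $T$ land in each prime dividing $1+u_{23}$. You have not shown that the polynomial map $(s_1,s_3,s_5,s_6)\mapsto T$ is surjective modulo those primes, nor handled the interaction with the constraint coming from $\Tr(XA)=0$. Worse, if $u_{23}=-1$ (e.g.\ $A=E_{12}-E_{23}$) the coefficient of $s_4$ vanishes identically and $s_4$ is useless; your outline says nothing about this case.

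The paper sidesteps both difficulties by taking $X$ lower triangular of the shape
\[
X=\begin{pmatrix}0&0&0\\x&-y&0\\q&z&0\end{pmatrix},
\]
with an extra variable $z$. The point of this shape is that $X^{2}+yX$ has its only nonzero entry in position $(3,1)$, and since $a_{13}=0$ one gets $\Tr(X^{r}A)=0$ for all $r\geq2$ automatically from $\Tr(XA)=0$. Thus only the single linear equation $\Tr(XA)=0$ needs to be solved, together with $xz+qy=1$; the latter is what forces $X_\mfp$ to be regular for every $\mfp$ (one checks that the minimal polynomial of $X_\mfp$ equals its characteristic polynomial $\lambda^2(\lambda+y)$). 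This trade-off---sacrificing global regularity of $X$ for a collapse of the trace conditions to a single equation---is exactly what makes the $n=3$ argument clean, and it is missing from your proposal.
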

\begin{proof}
As in the proof of Theorem~\ref{thm:Main} we may assume that $A$
is in Laffey-Reams form. Define the matrix
\[
X=\begin{pmatrix}0 & 0 & 0\\
x & -y & 0\\
q & z & 0
\end{pmatrix}\in\M_{3}(R).
\]
The same argument as in the proof of Theorem~\ref{thm:Main} shows
that $\Tr(XA)=0$ implies that $\Tr(X^{r}A)=0$ for all $r\geq0$.
Let $a_{23}'\in R$ be such that $a_{23}=a_{12}a_{23}'$, and let
$d\in R$ be a generator of $(a_{12},c(A))$. The condition $\Tr(XA)=0$
is then equivalent to
\begin{align*}
x & =hc(A)d^{-1}-a_{23}'z\\
y & =ha_{12}d^{-1},
\end{align*}
for any $h\in R$. We claim that the system of equations
\begin{equation}
\begin{cases}
x=hc(A)d^{-1}-a_{23}'z\\
y=ha_{12}d^{-1}\\
xz+qy=1
\end{cases}\label{eq:n3-system}
\end{equation}
has a solution in $x,y,q,z,h\in R$. Indeed, substituting the first
two equations in the last, we get
\[
-a_{23}'z^{2}+h(c(A)d^{-1}z+qa_{12}d^{-1})=1,
\]
and since $(c(A)d^{-1},a_{12}d^{-1})=(1)$ we can choose $z$ and
$q$ in $R$ such that $ $$c(A)d^{-1}z+qa_{12}d^{-1}=1$, and it
then remains to take
\[
h=1+a_{23}'z^{2}.
\]
Suppose now that $x,y,q,z,h\in R$ is a solution of (\ref{eq:n3-system}),
and let $\mfp\in\Specm R$. We show that $X_{\mfp}$ is regular. The
characteristic polynomial of $X$ is
\[
\lambda^{2}(\lambda+y)\in R[\lambda].
\]
We have
\[
X^{2}=\begin{pmatrix}0 & 0 & 0\\
-xy & y^{2} & 0\\
xz & -yz & 0
\end{pmatrix}.
\]
Thus, if $y\notin\mfp$ then $(X_{\mfp})^{2}\neq0$, and if $y\in\mfp$,
then we must have $xz\not\in\mfp$, so $(X_{\mfp})^{2}\neq0$ also
in this case. Furthermore, since $xz+qy=1$ we have
\[
X(X+y)=E_{31}\neq0.
\]
Thus the minimal polynomial of $X_{\mfp}$ must equal the characteristic
polynomial, so $X_{\mfp}$ is regular. Since we have $\Tr(X^{r}A)=0$
for all $r\geq0$, Proposition~(\ref{prop:Criterion}) implies that
$A=[X,Y]$, for some $Y\in\M_{3}(R)$.
\end{proof}
We remark that while the matrix $X$ in the proof of the above proposition
is regular modulo every $\mfp\in\Specm R$, it is not necessarily
regular. Moreover, while for $n=4$ we can find an analogous matrix
\[
X=\begin{pmatrix}0 & 0 & 0 & 0\\
x & -y & 0 & 0\\
q & z & 0 & 0\\
0 & 0 & 1 & -y
\end{pmatrix}
\]
such that $\Tr(AX)=0$ and $xz+yq=1$, in this case the matrix $X_{\mfp}$
may fail to be regular for some $\mfp\in\Spec R$.

\section{\label{sec:Further-directions}Further directions}

If $R$ is a field or if $R$ is a PID and $n=2$, we have shown that
every $A\in\M_{n}(R)$ with trace zero can be written $A=[X,Y]$ where
$X,Y\in\M_{n}(R)$ and $X$ is regular. Our proof of Theorem~\ref{thm:Main}
shows that for any PID $R$, $n\geq2$ and every $A\in\M_{n}(R)$
with trace zero we have $A=[X,Y]$ for some $X,Y\in\M_{n}(R)$ where
$X_{\mfp}$ is regular for all but finitely many maximal ideals $\mfp$
of $R$. Moreover, Proposition~\ref{prop:n3regX} says that when
$n=3$ the matrix $X$ can be chosen such that $X_{\mfp}$ is regular
for all maximal ideals $\mfp$.
\begin{problem*}
For $n\geq4$ and $A=[X,Y]$, is it always possible to choose $X$
such that $X_{\mfp}$ is regular for all maximal ideals $\mfp$?
\end{problem*}
This problem is interesting insofar as a proof, if possible, would
be likely to yield a substantially simplified proof of Theorem~\ref{thm:Main}.

It is natural to ask for generalisations of Theorem~\ref{thm:Main}
to rings other than PIRs. We first mention some counter-examples.
It was shown by Lissner \cite{Lissner} that the analogue of Theorem~\ref{thm:Main}
fails when $n=2$ and $R=k[x,y,z]$, where $k$ is a field, and more
generally that for $R=k[x_{1},\dots,x_{2n-1}]$ there exist matrices
in $\M_{n}(R)$ with trace zero which are not commutators (see \cite[Theorem~5.4]{Lissner}).
Rosset and Rosset \cite[Lemma~1.1]{Rosset} gave a sufficient criterion
for a $2\times2$ trace zero matrix over any commutative ring not
to be a commutator. They showed however, that a Noetherian integral
domain cannot satisfy their criterion unless it has dimension at least
$3$. This means that their criterion is not an obstruction to a $2\times2$
trace zero matrix over a one or two-dimensional Noetherian domain
being a commutator. Still, if $R$ is the two-dimensional domain $\R[x,y,z]/(x^{2}+y^{2}+z^{2}-1)$
it can be shown that there exists a matrix in $\M_{2}(R)$ with trace
zero which is not a commutator (this example goes back to Kaplansky;
see \cite[Section~4, Example~1]{Swan/62}, \cite[p.~532]{Lissner-OPrings}
or \cite[Section~3]{Rosset}).

A ring $R$ is called an \emph{OP-ring} if for every $n\geq1$ every
vector in $\bigwedge^{n-1}R^{n}$ is decomposable, that is, of the
form $v_{1}\wedge\dots\wedge v_{n-1}$ for some $v_{i}\in R^{n}$.
This is equivalent to saying that every vector in $R^{n}$ is an outer
product (hence the acronym OP). The notion of OP-ring was introduced
in \cite{Lissner-OPrings}. In particular, for $n=3$ the condition
on $R$ of being an OP-ring is equivalent to the condition that every
trace zero matrix in $\M_{2}(R)$ is a commutator (see \cite[Section~3]{Lissner}).
It is known that every Dedekind domain is an OP-ring \cite[p.~534]{Lissner-OPrings}
and that every polynomial ring in one variable over a Dedekind domain
is an OP-ring \cite[Theorem~1.2]{Towber}. This prompts the following
problem:
\begin{problem*}
Let $R$ be a Dedekind domain and assume that $A\in\M_{n}(R)$, $n\geq2$,
has trace zero. Is it true that $A=[X,Y]$ for some $X,Y\in\M_{n}(R)$?
\end{problem*}
\noindent Since Dedekind domains are OP-rings the question has an
affirmative answer for $n=2$, and one could ask the same question
for any OP-ring. In the setting of matrices over a Dedekind domain
the methods we have used to prove Theorem~\ref{thm:Main} are of
little use because they rely crucially on the underlying ring being
both atomic and B\'ezout, which implies that it is a PID.

\bibliographystyle{alex}
\bibliography{alex,/home/purem/lfvx79/Dropbox/Local_TeX_Files/bibtex/bib/alex,masterbibliography11jan11}

\end{document}